\title{Homeomorphism and Homotopy Types of Restricted Configuration Spaces of Metric Graphs}
\author{James Dover}
\address{Department of Mathematical Sciences, Cameron University, Burch Hall, Room B001, 2800 W. Gore Blvd., Lawton, OK, USA 73505}
\email{jdover@cameron.edu}
\urladdr{}
\author{Murad \"{O}zayd\i n}
\address{Department of Mathematics, University of Oklahoma, 601 Elm Avenue, Norman, OK, USA 73019-3103}
\email{mozaydin@math.ou.edu}
\urladdr{}
\newcommand{\vect}[1]{\textbf{#1}}
\newtheorem{theorem}{Theorem}[section]
\newtheorem{lemma}[theorem]{Lemma}
\newtheorem{cor}[theorem]{Corollary}
\newtheorem{exa}[theorem]{Example}
\begin{document}

\begin{abstract}    
For $\Gamma$ a finite, connected metric graph, $\Gamma^n_{\vect{r}}$ is the space of configurations of $n$ points in $\Gamma$ with a restraint parameter $\vect{r}$ dictating the minimum distance allowed between each pair of points.  These restricted configuration spaces come up naturally in topological robotics.  In this paper, we study the homotopy, homeomorphism, and isotopy types of $\{\Gamma^n_{\vect{r}}\}$ over the space of parameters $\vect{r}$ and provide a polynomial upper bound (in the number of edges of $\Gamma$) for the number of isotopy types.
\end{abstract}

\maketitle


\section{Introduction}

We study compact subspaces of the configuration space (of $n$ ordered points) of a finite {\it metric} graph, parametrized by restricting the proximities of the points.  These are variations on the {\it configuration spaces of thick particles on a metric graph} of Deeley \cite{Deeley11a}, \cite{Deeley11b}.  

The $n$th configuration space of (a topological space) $\Gamma$, which we denote by $\Gamma^{\underline{n}}$, is the space of one-to-one functions from $\{1, \ldots, n\}$ into $\Gamma$, that is: \begin{equation*}\Gamma^{\underline{n}} := \{(x_1, \ldots , x_n) \in \Gamma^n \, | \, x_i \neq x_j, \, 1 \leq i < j \leq n\}.\end{equation*}  Configuration spaces  are important in topological robotics (see, for example, \cite{Abrams00}, \cite{AG02}, \cite{Farber08}, \cite{Ghrist01}, \cite{Ghrist10}, \cite{GK02}), where the points correspond to robots  or AGVs (Automated Guided Vehicles) and two AGVs are not allowed to occupy the same position.     It seems more reasonable to require that any two points are no closer than a given threshold, yielding a configuration space of hard disks \cite{BBK11}, \cite{CGKM12}, or thick particles.  (From a topological viewpoint, these spaces have the additional benefit of being compact when $\Gamma$ is compact.)  When the robots move on a one-dimensional network of tracks, the underlying space $\Gamma$ is a graph.    In order to quantify the proximity restraints, we need a notion of distance on the graph.

A \textbf{metric graph} $\Gamma$ is obtained by assigning (positive) lengths to the edges (closed $1$--cells) of a graph (i.e., one-dimensional CW complex).  This gives an isometry between each closed edge and an interval and thus defines a path metric $\delta$ on $\Gamma$.  Our graphs will always be finite and connected (each connected component can be analyzed separately).  We will also assume that the underlying topological space is not a single point or homeomorphic to either the unit interval $[0,1]$ or the unit circle $S^1$, as these cases are well understood.  Then there is a unique cell structure with a minimum number of cells on our graph so that no node ($0$--cell) has valence two.  It is convenient for us to alter this slightly by subdividing loops at their midpoints; this yields the minimal {\it regular} (i.e., each closed cell is homeomorphic to a closed disk) cell structure on $\Gamma$.

Let $n \geq 2$ be an integer and $\vect{r} = (r_{ij})_{1 \leq i < j \leq n}$ be in $\R^{\binom{n}{2}}$.  The space \begin{equation*}\Gamma^n_{\vect{r}} := \{(x_1, \ldots, x_n) \in \Gamma^n \, | \, \delta(x_i,x_j) \geq r_{ij} \text{ for } 1 \leq i < j \leq n\}\end{equation*} is the $n$th configuration space of ``thick particles'' with restraint parameter $\vect{r}$.  
We will also consider (affine) subspaces of the space $\R^{\binom{n}{2}}$ of  restraint parameters such as the $n$--dimensional subspace given by $r_{ij} = r_i + r_j$ where the $i$th particle has radius $r_i$ or the $1$--dimensional subspace given by $r_{ij} = r$ for a single scalar parameter $r$ (i.e., all particles have the same radius/thickness $\frac{r}{2}$) considered by Deeley in \cite{Deeley11a} and \cite{Deeley11b}.

 Deeley shows, in particular, that the infinite family of spaces $\{\Gamma^2_r\}_{r \geq 0}$ consists of only finitely many homotopy types, and he gives an upper bound which is exponential in $E$, the number of edges of the graph.  He raises the question whether the number of homotopy types can be exponential in $E$.  Actually, there is a quadratic (in $E$) upper bound for the number of homotopy types of $\{\Gamma^2_r\}$.

We show (Theorem \ref{maintheorem} below) that there is an upper bound for the number of {\it homeomorphism} (in fact, {\it combinatorial isotopy}) types which is: (1) quadratic (in the number of edges) for \{$\Gamma^2_r\}$, (2) a polynomial of degree $n$ for $\{\Gamma^n_r\}$, and (3) a polynomial of degree $n d$ for $\{\Gamma^n_{\vect{r}}\}$, where the restraint parameters $\vect{r}$ lie in a $d$--dimensional subspace of $\R^{\binom{n}{2}}$ above.  Conversely, in Example \ref{corolla}, an infinite family of metric graphs provides a quadratic lower bound for the number of homotopy types of $\{\Gamma^2_r\}$.
 
The strategy of the proof is to describe a regular cell structure on $\Gamma^n_{\vect{r}}$ and an equivalence relation on the space of parameters $\vect{r}$.  If $\vect{r}$ is equivalent to $\vect{s}$, then $\Gamma^n_{\vect{r}}$ is isotopic to $\Gamma^n_{\vect{s}}$ in $\Gamma^n$ (Theorem \ref{Closed-Isotopy}).
The equivalence classes correspond to certain faces of a (finite) real hyperplane arrangement in the $d$--dimensional space of parameters.  The number of hyperplanes is bounded above by a polynomial (of degree $n$) in the number of edges of $\Gamma$.  Since the number of faces of a real hyperplane arrangement has an upper bound which is a polynomial (of degree $d$) in the number of hyperplanes, Theorem \ref{maintheorem} follows.

In fact, our setup handles the richer framework in which the edge lengths can also vary, so we have a parameter space of dimension $d + E$.  If we denote by $\Gamma^n_{\vect{L},\vect{r}}$ the configuration space with restraint parameter $\vect{r}$ of the metric graph $\Gamma$ whose edge lengths are given by the vector $\vect{L}$, the number of homeomorphism types of $\{\Gamma^n_{\vect{L},\vect{r}}\}$ is again bounded by the number of faces of a real hyperplane arrangement in $\R^{d + E}$.  The number of hyperplanes in this arrangement is still bounded by a polynomial in $E$ of degree $n$.   

We start with a convenient (regular) cell structure on $\Gamma^n$ whose cells are products of simplices (an edge with $k$ points on it contributing a $k$--dimensional simplex) which encode the relative positions of the points on each edge (see the beginning of Section \ref{cells} for the precise definition).  There are  $E^{\bar{n}}=E(E+1)\cdots(E+n-1)$ maximal ($n$--dimensional) cells.  Each maximal cell $c_{\vect{r}}$ of $\Gamma^n_{\vect{r}}$ is the intersection of $\Gamma^n_{\vect{r}}$ with an $n$--cell $c$ of $\Gamma^n$ (because every node lies on an edge).  The cell $c_{\vect{r}}$ is naturally a convex polytope of the form $\{\vect{x} \in \R^n \, | \, A\vect{x} \leq \vect{b}\}$ where $\vect{b} = \vect{b}(\vect{r})$ depends both on $c$ and on the parameter $\vect{r}$ while the $m \times n$ matrix $\textbf{A}$ depends only on the $n$--cell $c$ of $\Gamma^n$, as explained in Section \ref{cells}.  The cells of $\Gamma^n_{\vect{r}}$ are all of the faces of the maximal cells $\{c_{\vect{r}}\}$.

We study the combinatorial types of parametric polytopes  via a Galois connection in Section \ref{polytopes} below.   In Section \ref{cells}, we apply these considerations to the cells $c_{\vect{r}}$, and we give a combinatorial model determining the isotopy type of $\Gamma^n_{\vect{r}}$.  In Section \ref{example}, we compare our setup to Deeley's \cite{Deeley11b}, analyze the cells of $\Gamma^2_r$ and compute the explicit quadratic upper bound $9E^2 - 5E - 1$ for the number of isotopy types of $\{\Gamma^2_r\}$.  We establish the polynomial upper bounds for the number of isotopy types of $\{\Gamma^n_{\vect{r}}\}$ in Section \ref{bound} via a hyperplane arrangement in a $d$--dimensional parameter space.  Finally, in Section \ref{strict}, we consider the topology of (non-compact) restricted configuration spaces where some of the proximity restraints are given by strict inequalities.

\section{Parametric Polytopes}\label{polytopes}

A parametric polytope in $\R^n$ is given by a system of inequalities
\begin{equation*} \vect{A}x \leq \vect{b}\end{equation*} where  $x$ is (a column vector) in $\R^n$, $\Lambda$ is a finite set,  $\vect{b} = (b_{\lambda})_{\lambda \in \Lambda}$ is in $\R^{\Lambda}$, and the rows of $\vect{A}$ are denoted by $A_{\lambda}$.  For a fixed $\vect{b} \in \R^{\Lambda}$, this system of inequalities indexed by $\Lambda$ determines some convex polytope $c_{\vect{b}} \subset \R^n$.   We will further assume that for all $\vect{b} \in \R^{\Lambda}$, the polytope $c_{\vect{b}}$ is bounded.  We denote by $H_{\lambda}$ the hyperplane in $\R^n$ defined by the equation $A_{\lambda}  x = b_{\lambda}$.  

For any subset $\beta \subset \Lambda$, $\vect{A}_{\beta}$ is the submatrix of $\vect{A}$ with rows $A_{\lambda}, \lambda \in \beta$, and $\vect{b}_{\beta}$ is $(b_{\lambda})_{\lambda \in \beta}$.
If $\vect{A}_{\beta}$ is invertible (we will refer to such $\beta$ as {\it basic}), the unique solution $\vect{A}^{-1}_{\beta}\vect{b}_{\beta}$ of the system of equations $\{A_{\lambda} x = b_{\lambda}\}$ is a {\it potential vertex} of $c_{\vect{b}}$.  If a potential vertex $\vect{A}^{-1}_{\beta}\vect{b}_{\beta}$ satisfies the remaining inequalities indexed by $\Lambda$, then it is a vertex of $c_{\vect{b}}$.  In this way, the vertices of $c_{\vect{b}}$ are given by subsets $\beta \subset \Lambda$ such that $\{A_{\lambda}\}_{\lambda \in \beta}$ forms a basis.  

Define $S_{\vect{b}} : = \{\beta \, | \, \vect{A}_{\beta} \text{ a basis of } \R^n, \, A_{\lambda}  \vect{A}^{-1}_{\beta}\vect{b}_{\beta} \leq b_{\lambda} \, \forall \, \lambda \in \Lambda \}$.  This is the collection of basic sets which give rise to actual vertices of $c_{\vect{b}}$.  

However, two such basic sets $\alpha$ and $\beta$ produce the same potential vertex if $\vect{A}_{\alpha}^{-1}\vect{b}_{\alpha} = \vect{A}_{\beta}^{-1}\vect{b}_{\beta}$.   Define $v_{\vect{b}}(\beta) : = \{\lambda \, | \, A_{\lambda} \vect{A}^{-1}_{\beta}\vect{b}_{\beta} = b_{\lambda}\}$.  This is the maximal set such that any basic subset produces the same potential vertex as $\beta$.  If $\vect{A}_{\alpha}^{-1}\vect{b}_{\alpha} = \vect{A}_{\beta}^{-1}\vect{b}_{\beta}$, then $v_{\vect{b}}(\alpha) = v_{\vect{b}}(\beta)$.  If $\vect{A}_{\beta}^{-1}\vect{b}_{\beta}$ is a vertex of $c_{\vect{b}}$, then $v_{\vect{b}}(\beta)$ is the \textbf{abstract vertex} that indexes it.

The \textbf{type} of $\vect{b}$ is the collection $T_{\vect{b}} : = \{v_{\vect{b}}(\beta) \, | \, \beta \in S_{\vect{b}}\}$ of abstract vertices  of $c_{\vect{b}}$.

\begin{lemma}\label{polytope}
The family of polytopes $\{c_{\vect{b}}\}_{\vect{b} \in \R^{\Lambda}}$ has only finitely many combinatorial types.
\end{lemma}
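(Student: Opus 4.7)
The plan is to prove the lemma in two steps: first, that the type $T_{\vect{b}}$ takes only finitely many values as $\vect{b}$ varies; and second, that the type $T_{\vect{b}}$ determines the combinatorial type of $c_{\vect{b}}$. The first step is immediate: since $\Lambda$ is finite, $T_{\vect{b}}$ belongs to the finite set $\mathcal{P}(\mathcal{P}(\Lambda))$, so the assignment $\vect{b} \mapsto T_{\vect{b}}$ has finite image. All the real work is in the second step.

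For the second step, I would argue that $T_{\vect{b}}$ records the complete vertex-facet incidence of $c_{\vect{b}}$. First, the elements of $T_{\vect{b}}$ are in bijection with the vertices of $c_{\vect{b}}$: two basic sets $\alpha, \beta \in S_{\vect{b}}$ with $v_{\vect{b}}(\alpha) = v_{\vect{b}}(\beta)$ must yield the same point $\vect{A}^{-1}_{\beta}\vect{b}_{\beta}$ (since that point satisfies all the equations indexed by $v_{\vect{b}}(\alpha)$, whose coefficient vectors already span $\R^n$), while distinct elements of $T_{\vect{b}}$ give distinct vertices by construction. Second, for each abstract vertex $V \in T_{\vect{b}}$, the subset $V \subseteq \Lambda$ is exactly the list of indices $\lambda$ for which the corresponding vertex lies on the hyperplane $H_{\lambda}$.

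From this incidence data I would reconstruct the face lattice of $c_{\vect{b}}$. For any nonempty $\mathcal{V} \subseteq T_{\vect{b}}$, set $I(\mathcal{V}) := \bigcap_{V \in \mathcal{V}} V \subseteq \Lambda$. Invoking the classical fact that each face of a bounded convex polytope is both the intersection of the facets containing it and the convex hull of its vertices, the map $F \mapsto \Lambda_F := \{\lambda : F \subseteq H_{\lambda}\}$ is an order-reversing bijection from the poset of nonempty faces of $c_{\vect{b}}$ onto the poset $\{I(\mathcal{V}) : \emptyset \neq \mathcal{V} \subseteq T_{\vect{b}}\}$ ordered by inclusion. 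Since the latter poset depends only on $T_{\vect{b}}$, so does the combinatorial type of $c_{\vect{b}}$, and the lemma then follows from finiteness of $\mathcal{P}(\mathcal{P}(\Lambda))$.

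The main obstacle will be justifying this reconstruction of the face lattice of a convex polytope from its vertex-facet incidence, particularly if some rows of $\vect{A}$ are redundant or if $c_{\vect{b}}$ is not full-dimensional: one must verify that the description of faces as $\Lambda_F = \bigcap_{V \in \mathcal{V}} V$ is genuinely bijective and not merely surjective onto some coarser quotient. Everything else, including the finiteness count, is routine.
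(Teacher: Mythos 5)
Your proposal is correct and follows the same high-level strategy as the paper: both arguments show that the abstract vertex set $T_{\vect{b}}$ determines the face poset of $c_{\vect{b}}$ (via the intersection poset of its elements) and then invoke finiteness of $2^{2^{\Lambda}}$. The difference lies in how that key subclaim is established. The paper builds an explicit order-reversing Galois connection $g, h$ between subsets of $\Lambda$ contained in some abstract vertex and nonempty subsets of the actual vertex set $V_{\vect{b}}$, and then identifies the closed elements on each side as the intersection poset of $T_{\vect{b}}$ and the poset of vertex sets spanning faces. You instead appeal directly to the classical fact that the face lattice of an $\mathcal{H}$-polytope is recovered from vertex-facet incidence, namely that $F \mapsto \Lambda_F = \{\lambda : F \subseteq H_{\lambda}\}$ is an order-reversing bijection onto the intersection poset. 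Your worry in the final paragraph about redundant rows and non-full-dimensionality is the right thing to flag, but it is not a genuine gap: for any polyhedron $P = \{x : \vect{A}x \leq \vect{b}\}$ and any nonempty face $F$, one has $F = \{x \in P : A_{\lambda}x = b_{\lambda} \text{ for all } \lambda \in \Lambda_F\}$, so $\Lambda_F$ determines $F$ even when $P$ is degenerate or some constraints are redundant; surjectivity onto the intersection poset follows from the argument you sketch with minimal faces through given vertex sets. In short, the paper's Galois connection is a self-contained way of proving exactly the incidence fact you cite, so the two proofs are substantively the same; your version just outsources the final verification to standard polytope theory.
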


\begin{proof}
We will show that the intersection semi-lattice $\{\bigcap_{\beta \in S} v_{\vect{b}}(\beta) \, | \, \emptyset \neq S \subseteq S_{\vect{b}}\}$ of $T_{\vect{b}}$ is isomorphic to the face poset $F_{\vect{b}}$ of $c_{\vect{b}}$.   Hence, if $T_{\vect{b}} = T_{\vect{b}^{\prime}}$, then $c_{\vect{b}}$ and $c_{\vect{b}^{\prime}}$ have the same combinatorial type.  Since $\Lambda$ is finite, there are only finitely many possibilities for the abstract vertex set $T_{\vect{b}}$.

Let $V = V_{\vect{b}} := \{\vect{A}^{-1}_{\beta}\vect{b}_{\beta} \, | \, \beta \in S_{\vect{b}}\}$ be the collection of actual vertices of $c_{\vect{b}}$.  

Define:
\begin{equation*}
 g : \{\gamma \subseteq \Lambda \; | \; \gamma \subseteq v_{\vect{b}}(\beta) \text{ for some } \beta \in S_{\vect{b}}\} \rightarrow 2^V \setminus \{\emptyset\}
\end{equation*}
\begin{equation*}
g(\gamma) := \bigcup_{\beta : \; \gamma \subseteq v_{\vect{b}}(\beta) \in T_{\vect{b}}} \{\vect{A}^{-1}_{\beta}\vect{b}_{\beta}\}
\end{equation*}
and
\begin{equation*}
h:  2^V \setminus \{\emptyset\} \rightarrow  \{\gamma \subseteq \Lambda \; | \; \gamma \subseteq v_{\vect{b}}(\beta) \text{ for some } \beta \in S_{\vect{b}}\}
\end{equation*}
\begin{equation*}
h(\rho) := \bigcap_{\beta : \; \vect{A}^{-1}_{\beta}\vect{b}_{\beta} \in \rho} v_{\vect{b}}(\beta)
\end{equation*}

The two maps, $g$ and $h$, are well-defined and order-reversing (with respect to inclusion).  It can be checked that $\rho \subset g(h(\rho))$ for all $\rho$ and that $\gamma \subset h(g(\gamma))$ for all $\gamma$.  Therefore, $g$ and $h$ form a Galois connection (see \cite{Mac Lane98}) between the two posets, and the images of $g \circ h$ and $h \circ g$ are isomorphic.  The image of $g \circ h$ is the poset of sets of vertices that span a face of $c_{\vect{b}}$, ordered by inclusion, which is clearly isomorphic to the face poset $F_{\vect{b}}$.  The image of $h \circ g$ is the intersection poset of $T_{\vect{b}}$.

\end{proof}

To summarize, the vertices of a bounded, convex polytope are indexed by subsets $v_{\vect{b}}(\beta)$ of $\Lambda$, the abstract vertices.  If $v_{\vect{b}}(\alpha)$ and $v_{\vect{b}}(\beta)$ index two vertices of $c_{\vect{b}}$, then the intersection $v_{\vect{b}}(\alpha) \cap v_{\vect{b}}(\beta)$ indexes the lowest dimensional face of $c_{\vect{b}}$ containing both vertices.  Hence, the faces of $c_{\vect{b}}$ are also indexed by subsets of $\Lambda$ (via an order-reversing isomorphism).  If $T_{\vect{b}} = T_{\vect{b}^{\prime}}$ for $b, b^{\prime} \in \R^{\Lambda}$, then not only does $c_{\vect{b}}$ clearly have the same combinatorial type as $c_{\vect{b}^{\prime}}$, but the isomorphism is such that corresponding faces are indexed by the same subset of $\Lambda$.

\begin{lemma}
Let $\vect{b}^{\prime} \in \R^{\Lambda}$.  If $\emptyset \neq T_{\vect{b}^{\prime}} = T$, then the set of all $\vect{b} \in \R^{\Lambda}$ with $T_{\vect{b}} = T$ is convex.
\end{lemma}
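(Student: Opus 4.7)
The plan is to prove convexity by showing that if $T_{\vect{b}_1} = T_{\vect{b}_2} = T$ and $\vect{b}_t := t\vect{b}_1 + (1-t)\vect{b}_2$ with $t \in [0,1]$, then $T_{\vect{b}_t} = T$. The key observation is that for each basic $\beta \subseteq \Lambda$ and each $\lambda \in \Lambda$, the residual $f_{\beta,\lambda}(\vect{b}) := A_\lambda \vect{A}_\beta^{-1} \vect{b}_\beta - b_\lambda$ is a linear functional of $\vect{b}$, so sign conditions (equalities and strict inequalities) that hold at both $\vect{b}_1$ and $\vect{b}_2$ persist at $\vect{b}_t$.

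For the inclusion $T \subseteq T_{\vect{b}_t}$: for each $v \in T$, pick any basic $\beta_v \subseteq v$ (such exists because $\vect{A}_v$ has rank $n$). The remark following Lemma~\ref{polytope} gives $\beta_v \in S_{\vect{b}_i}$ with $v_{\vect{b}_i}(\beta_v) = v$ for $i = 1,2$, so $f_{\beta_v,\lambda}(\vect{b}_i) = 0$ for $\lambda \in v$ and $f_{\beta_v,\lambda}(\vect{b}_i) < 0$ for $\lambda \notin v$. By linearity the same sign pattern holds at $\vect{b}_t$, giving $v \in T_{\vect{b}_t}$.

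For the reverse inclusion $T_{\vect{b}_t} \subseteq T$: let $v' = v_{\vect{b}_t}(\beta')$ with basic $\beta' \in S_{\vect{b}_t}$. If $\beta' \subseteq v$ for some $v \in T$, the forward argument applied with $\beta_v = \beta'$ gives $v_{\vect{b}_t}(\beta') = v$, so $v' = v \in T$. Otherwise $\beta' \not\subseteq v$ for every $v \in T$; then $\beta' \notin S_{\vect{b}_1} \cup S_{\vect{b}_2}$ (for if $\beta' \in S_{\vect{b}_i}$ then $v_{\vect{b}_i}(\beta') \in T$ would contain $\beta'$), so each $\vect{b}_i$ has some $\lambda^{(i)}$ with $f_{\beta',\lambda^{(i)}}(\vect{b}_i) > 0$. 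I would argue that these can be taken to agree on a common index $\lambda^*$; linearity then gives $f_{\beta',\lambda^*}(\vect{b}_t) > 0$, contradicting $\beta' \in S_{\vect{b}_t}$.

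The main obstacle is establishing this common $\lambda^*$. My approach is an adjacency argument: when the infeasible potential vertex $\vect{A}_{\beta'}^{-1}\vect{b}_{\beta'}$ is cut off by a single hyperplane $\lambda^*$, the neighboring actual vertices of the polytope are indexed by $(\beta' \setminus \{k\}) \cup \{\lambda^*\}$ for $k \in \beta'$. These labels lie in $T = T_{\vect{b}_1} = T_{\vect{b}_2}$, and $\lambda^*$ is determined as the unique element outside $\beta'$ common to them. Feasibility of these adjacent vertices at both $\vect{b}_1$ and $\vect{b}_2$ then forces $f_{\beta',\lambda^*}(\vect{b}_i) > 0$ for $i = 1, 2$, completing the contradiction.
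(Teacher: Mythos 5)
Your approach is genuinely different from the paper's: you try to show directly that the type locus is closed under convex combinations, whereas the paper first defines an explicit convex set $P_T$ (the solution set of the linear equalities and strict inequalities that pin down the sign pattern of each residual $A_\lambda \vect{A}_\beta^{-1}\vect{b}_\beta - b_\lambda$ for $\beta$ basic in $v \in T$) and then proves $\{\vect{b}: T_{\vect{b}} = T\} = P_T$. Your forward inclusion $T \subseteq T_{\vect{b}_t}$ is correct and is essentially the same linearity observation the paper uses to get $P_T \supseteq \{T_{\vect{b}} = T\}$ and $S_{\vect{b}'} \subseteq S_{\vect{b}}$.

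The gap is in the reverse inclusion, and it is a real one. The function $\vect{b} \mapsto \max_\lambda (A_\lambda \vect{A}_{\beta'}^{-1}\vect{b}_{\beta'} - b_\lambda)$ is convex, so it can perfectly well be positive at $\vect{b}_1$ and $\vect{b}_2$ yet nonpositive at an interior $\vect{b}_t$; thus ``infeasible at both endpoints'' does \emph{not} by itself rule out feasibility in between, and you must use the hypothesis $T_{\vect{b}_1} = T_{\vect{b}_2} = T$ in an essential way. Your adjacency sketch does not do this. It only treats the case where a single hyperplane $\lambda^*$ cuts off the potential $\beta'$-vertex; it asserts without justification that the neighboring actual vertices are exactly $(\beta' \setminus \{k\}) \cup \{\lambda^*\}$ (this requires a genericity that is not given); and, most critically, it never explains why ``feasibility of these adjacent vertices at $\vect{b}_2$'' forces the \emph{same} $\lambda^*$ to be the violated constraint at $\vect{b}_2$ -- the same abstract vertices can exist at $\vect{b}_2$ while the $\beta'$-potential vertex is cut off by a different hyperplane. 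The paper handles exactly this difficulty with a bootstrap that your formulation lacks: it walks along an edge of $c_{\vect{b}}$ to an adjacent $\alpha \in S_{\vect{b}'}$, produces a basic $\gamma = (\alpha \cap \beta) \cup \{\nu\} \in S_{\vect{b}'}$, finds an intermediate $\vect{b}''$ on the segment where $A_\nu \vect{A}_\beta^{-1}\vect{b}''_\beta = b''_\nu$, and then uses convexity of $P_T$ to conclude $\vect{b}'' \in P_T$, hence $v_{\vect{b}''}(\gamma) = v_{\vect{b}'}(\gamma)$, which yields the contradiction $\beta \in S_{\vect{b}'}$. To make your direct segment argument work you would need some analogue of this self-referential use of convexity; the adjacency picture alone is not enough.
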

\begin{proof}
Suppose $T = T_{\beta^{\prime}} \neq \emptyset$.  Define 
\begin{equation*}
P_T := \bigcap_{\beta \subseteq v \in T} \left\{ \vect{b} \; | \begin{array}{lr}
 A_{\lambda} \vect{A}_{\beta}^{-1}\vect{b}_{\beta} = b_{\lambda}, & \lambda \in v \\
A_{\mu} \vect{A}^{-1}_{\beta}\vect{b}_{\beta} < b_{\mu}, & \mu \notin v
\end{array}\right \}.
\end{equation*}
This is a convex subset of $\R^{\Lambda}$.  We claim that $\vect{b} \in P_T$ if and only if $T_{\vect{b}} = T$.

If $T_{\vect{b}} = T$, then, if $v \in T$ and $\beta$ is a basic subset with $v_{\vect{b}}(\beta) = v$, then $A_{\lambda} \vect{A}^{-1}_{\beta}\vect{b}_{\beta} = b_{\lambda}$ for all $\lambda \in v$ by the definition of $v_{\vect{b}}(\beta)$.  Also, $A_{\mu} \vect{A}^{-1}_{\beta}\vect{b}_{\beta} < b_{\mu}$ for all $\mu \notin v$ since $v \in T_{\vect{b}}$ (i.e., $\vect{A}^{-1}_{\beta}\vect{b}_{\beta}$ is an actual vertex of $c_{\vect{b}}$).  Therefore, we have that $\vect{b} \in P_T$.

Now let $\vect{b} \in P_T$.  First we note that $S_{\vect{b}^{\prime}} \subseteq S_{\vect{b}}$:  If $\beta$ produces an actual vertex of $c_{\vect{b}^{\prime}}$, then it produces an actual vertex of $c_{\vect{b}}$.  Further, for all $\gamma \in S_{\vect{b}^{\prime}}$, we have $v_{\vect{b}}(\gamma) = v_{\vect{b}^{\prime}}(\gamma)$.  Therefore, if we can show that $S_{\vect{b}} = S_{\vect{b}^{\prime}}$, we will also have that $T_{\vect{b}} = T_{\vect{b}^{\prime}}$.

Suppose there exists $\beta \in S_{\vect{b}} \setminus S_{\vect{b}^{\prime}}$.  Since $c_{\vect{b}}$ is connected, we can find $\alpha \in S_{\vect{b}^{\prime}}$ such that the vertices $\vect{A}^{-1}_{\alpha}\vect{b}_{\alpha}$ and $\vect{A}^{-1}_{\beta}\vect{b}_{\beta}$ lie on an edge of $c_{\vect{b}}$.  In terms of the abstract vertices, this means $v_{\vect{b}}(\alpha) \neq v_{\vect{b}}(\beta)$ and $|\alpha \cap \beta| = n - 1$.  Say $\beta = (\alpha \cup \{\mu\}) \setminus \{\lambda\}$.

Now $\vect{A}^{-1}_{\alpha}\vect{b}^{\prime}_{\alpha}$ is a vertex of $c_{\vect{b}^{\prime}}$, but $\vect{A}^{-1}_{\beta}\vect{b}^{\prime}_{\beta}$ is not.  Therefore, it must be that $A_{\nu} \vect{A}^{-1}_{\beta}\vect{b}^{\prime}_{\beta} > b^{\prime}_{\nu}$ for some $\nu \in \Lambda$.  Also, the line segment between the points $\vect{A}^{-1}_{\alpha}\vect{b}^{\prime}_{\alpha}$ and $\vect{A}^{-1}_{\beta}\vect{b}^{\prime}_{\beta}$ must pass through at least one hyperplane $H_{\nu}: A_{\nu} x = b^{\prime}_{\nu}$ for such $\nu$.  Choose a $\nu$ such that this intersection is closest to $\vect{A}^{-1}_{\alpha}\vect{b}^{\prime}_{\alpha}$.  Then $H_{\nu}$ separates the line segment into two pieces, one lying in $c_{\vect{b}^{\prime}}$ and one not.  Set $\gamma := (\alpha \cap \beta) \cup \{\nu\}$ .  Then $\vect{A}_{\gamma}$ is a basis, and $\gamma \in S_{\vect{b}^{\prime}}$.

Now $A_{\nu} \vect{A}^{-1}_{\beta}\vect{b}_{\beta} < b_{\nu}$ and $A_{\nu} \vect{A}^{-1}_{\beta}\vect{b}^{\prime}_{\beta} > b^{\prime}_{\nu}$, so there exists $\vect{b}^{\prime \prime}$ on the line segment between $\vect{b}$ and $\vect{b}^{\prime}$ such that $A_{\nu} \vect{A}^{-1}_{\beta}\vect{b}^{\prime \prime}_{\beta} = b^{\prime \prime}_{\nu}$.  Hence, $\nu \in v_{\vect{b}^{\prime \prime}}(\beta)$ and $\gamma \subset v_{\vect{b}^{\prime \prime}}(\beta)$, yielding that $v_{\vect{b}^{\prime \prime}}(\beta) = v_{\vect{b}^{\prime \prime}}(\gamma)$.  In particular, $\beta \in v_{\vect{b}^{\prime \prime}}(\gamma)$. However, since $P_T$ is convex, $\vect{b}^{\prime \prime} \in P_T$, so by our earlier remarks,  $v_{\vect{b}^{\prime \prime}}(\gamma) = v_{\vect{b}^{\prime}}(\gamma)$.  This is a contradiction to $\beta \notin S_{\vect{b}^{\prime}}$.  

Thus, $S_{\vect{b}} = S_{\vect{b}^{\prime}}$ and $T_{\vect{b}} = T_{\vect{b}^{\prime}} = T$ as claimed.  This shows that the convex set $P_T$ is exactly the set of all $\vect{b} \in \R^{\Lambda}$ with $T_{\vect{b}} = T$.
\end{proof}

If $\vect{b}$ and $\vect{b}^{\prime}$ have the same (nonempty) type $T$, then any $\vect{b}^{\prime \prime} = (1-t)\vect{b} + t \vect{b}^{\prime}$ with $0 \leq t \leq 1$ is of type $T$ as well.  We observe that any vertex $\vect{A}^{-1}_{\beta}\vect{b}^{\prime \prime}_{\beta}$ of $c_{\vect{b}^{\prime \prime}}$ is simply the combination $(1-t)\vect{A}^{-1}_{\beta}\vect{b}_{\beta} + t\vect{A}^{-1}_{\beta}\vect{b}^{\prime}_{\beta}$ of the corresponding vertices of $c_{\vect{b}}$ and $c_{\vect{b}^{\prime}}$.

\section{The Cell Structure of $\Gamma^n_{\vect{r}}$}\label{cells}

We begin with a regular cell structure on $\Gamma^n$.  The graph $\Gamma$ itself is a regular cell complex (as long as $\Gamma$ has no loops) whose cells are its nodes and edges.  The space $\Gamma^n$ has the obvious product structure: Its cells are $n$--fold products of nodes and edges.  A cell $c$ is identified with the subspace $[0,L_{e_1}] \times \cdots \times [0,L_{e_m}]$ of $\R^m$ where $e_1, \ldots, e_m$ are the edge factors of $c$.   However, if an edge is repeated in $c$, the part of $c$ inside $\Gamma^n_{\vect{r}}$ is usually not connected.  To circumvent this problem, we will subdivide cells which include repeated edges in this product in the following way.  A product $[0,L_e]^k$ can be triangulated with maximal simplices  of the form $\{(x_1, \ldots, x_k) \in [0,L_e]^k \; | \; x_{\sigma(1)} \leq \cdots \leq x_{\sigma(k)}\}$ for a permutation $\sigma$.  Doing this for each repeated edge produces a cell which is a product of simplices.  Now the intersection of $\Gamma^n_{\vect{r}}$ with a cell $c$ will be a (bounded) convex polytope defined by a (finite) system of inequalities.  These polytopes and all their faces will give the cell structure of $\Gamma^n_{\vect{r}}$.

If $c$ is a maximal cell of $\Gamma^n$, then $c$ is a subset of the product $e_1 \times \cdots \times e_n$ specified by permutations assigned to the repeated factors.   
Let $a_i$ and $b_i$ be the nodes of $e_i$ corresponding to $0$ and $L_{e_i}$ respectively.  The following inequalities describe the conditions for a point $(x_1, \ldots, x_n)$ in $c$ to lie in $\Gamma^n_{\vect{r}}$:
\begin{enumerate}
\renewcommand{\labelenumii}{\arabic{enumii}.}
\item[$C^i$:] For $1 \leq i \leq n$,
\begin{enumerate}
\item[$C^i_1$:] $-x_i \leq 0$,
\item[$C^i_2$:] $x_i \leq L_{e_i}$.
\end{enumerate}
\item[$D^{ij}$:] For $1 \leq i < j \leq n$, if $e_i \neq e_j$,
\begin{enumerate}
\item[$D^{ij}_1$:] $-x_i - x_j \leq -r_{ij} + \delta(a_i,a_j)$,
\item[$D^{ij}_2$:] $-x_i + x_j \leq -r_{ij} + \delta(a_i,b_j) + L_{e_j}$,
\item[$D^{ij}_3$:] $x_i - x_j \leq -r_{ij} + \delta(b_i,a_j) + L_{e_i}$,
\item[$D^{ij}_4$:] $x_i + x_j \leq -r_{ij} + \delta(b_i,b_j) + L_{e_i} + L_{e_j}$.
\end{enumerate}
\item[$E^{ij}$:] 
For $1 \leq i , j \leq n$, if $e_i = e_j$ and $x_i \leq x_j$ in $c$,
\begin{enumerate}
\item[$E^{ij}_1$:] $x_i - x_j \leq 0$,
\item[$E^{ij}_2$:] $x_i - x_j \leq -r_{ij}$,
\item[$E^{ij}_3$:] $-x_i + x_j \leq -r_{ij} + \delta(a_i,b_i) + L_{e_i}$.
\end{enumerate}
\end{enumerate}
The intersection of the cell $c$ with $\Gamma^n_{\vect{r}}$ is the (bounded) convex polytope determined by these inequalities, which we denote by $c_{\vect{r}}$.  

In the language of the previous section, for a maximal cell $c$ of $\Gamma^n$, the index set $\Lambda = \Lambda^c$ is the collection of labels, $C^i_{1-2},D^{ij}_{1-4},$ and $E^{ij}_{1-3}$, which are applicable to $c$.  Let $\Omega := \{\vect{r} \in \R^{\binom{n}{2}} \, | \, r_{ij} \geq 0\}$.  Each $b_{\lambda}$ is an affine function from $\Omega$ to $\R$.

The inequalities of types $C^i_1$, $C^i_2$, and $E^{ij}_1$ describe the maximal cell $c$, and replacing a collection $\tau$ of these with equalities describes a lower dimensional face $d$ of $c$ in $\Gamma^n$.  The solution set of those equations and the remaining inequalities is the cell $d_{\vect{r}} \subseteq c_{\vect{r}}$ of $\Gamma^n_{\vect{r}}$.  If we incorporate those equations into the remaining inequalities, some of the variables are eliminated or identified resulting in a collection of inequalities in $\R^{\dim d}$.  At this point, we eliminate some redundant inequalities: For example, if $x_i = 0$, the  inequality resulting from $D^{ij}_3$ is always made redundant by the one resulting from $D^{ij}_1$, and the inequality resulting from $D^{ij}_4$ is always made redundant by the one resulting from $D^{ij}_2$.  These redundancies do not depend on $\vect{r}$.  Thus, for any cell $d$ of $\Gamma^n$, we have a fixed collection of inequalities in $\dim d$ variables which determine the cell $d_{\vect{r}} = d \cap \Gamma^n_{\vect{r}}$;  moreover, this collection is the same regardless of the maximal cell $c$ containing $d$ with which we started.  Furthermore, if $d$ is a face of $c$, each of these inequalities naturally corresponds to the inequality for $c$ from which it is obtained, giving a description of $\Lambda^d$ as a subset of $\Lambda^c$.   Therefore, if a face of $d_{\vect{r}}$ is indexed by a subset $\rho \subset \Lambda^d$, it is identified with the face of $c_{\vect{r}}$ indexed by the set $\rho \cup \tau \subset \Lambda^c$.

The polytopes $c_{\vect{r}}$ and their faces comprise a regular cell structure on $\Gamma^n_{\vect{r}}$.  As described in the proof of \ref{polytope}, the face poset $F^c_{\vect{r}}$ of $c_{\vect{r}}$ is isomorphic to the intersection poset of $T^c_{\vect{r}}$, which is a collection of subsets of $\Lambda^c$, the abstract vertices of $c_{\vect{r}}$.   If $d$ is a face of $c$, then the intersection poset of $T^d_{\vect{r}}$ naturally includes into that of $T^c_{\vect{r}}$ as a principal ideal.

We define an equivalence relation on $\Omega$ by: $\vect{r} \sim \vect{s}$ if and only if $T^c_{\vect{r}} = T^c_{\vect{s}}$ for every cell $c$ of $\Gamma^n$.  Because $\Gamma^n$ has only finitely many cells and there are only finitely many possibilities for $T^c_{\vect{r}}$ as $\vect{r}$ varies, there are only finitely many equivalence classes in $\Omega$.

\begin{theorem}\label{Closed-Homeo}
If $\vect{r} \sim \vect{s}$, then $\Gamma^n_{\vect{r}}$ is homeomorphic to $\Gamma^n_{\vect{s}}$.
\end{theorem}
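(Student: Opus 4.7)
The plan is to build the homeomorphism one cell at a time and glue. By Lemma \ref{polytope} and the discussion following it, the hypothesis $T^c_{\vect{r}} = T^c_{\vect{s}}$ for every maximal cell $c$ of $\Gamma^n$ produces an isomorphism of the face posets $F^c_{\vect{r}}$ and $F^c_{\vect{s}}$ under which corresponding faces are indexed by the same subset of $\Lambda^c$. Moreover, as recorded at the end of Section \ref{cells}, whenever $d$ is a face of the maximal cell $c$ cut out by turning a subset $\tau \subseteq \Lambda^c$ of constraints into equalities, the face poset of $d_{\vect{r}}$ embeds into that of $c_{\vect{r}}$ as the principal ideal of faces indexed by supersets of $\tau$, and likewise on the $\vect{s}$ side; the combinatorial identifications are therefore compatible with these embeddings.

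To produce canonical homeomorphisms $\phi_c : c_{\vect{r}} \to c_{\vect{s}}$ that automatically match on common faces, I would use barycentric subdivision. For each face $f$ of the convex polytope $c_{\vect{r}}$ let $b_f^{\vect{r}}$ be its barycenter (the average of the vertices $\vect{A}^{-1}_{\beta}\vect{b}_{\beta}$ over the basic subsets $\beta$ indexing the vertices of $f$). The barycentric subdivision $\mathrm{sd}(c_{\vect{r}})$ is then the simplicial complex whose $k$-simplices are the chains $f_0 \subsetneq \cdots \subsetneq f_k$ of faces of $c_{\vect{r}}$, realized as the affine simplex spanned by the corresponding barycenters. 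The face-poset isomorphism delivers a simplicial isomorphism $\mathrm{sd}(c_{\vect{r}}) \to \mathrm{sd}(c_{\vect{s}})$ sending $b_f^{\vect{r}} \mapsto b_f^{\vect{s}}$, and extending affinely on each simplex defines $\phi_c$.

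Next I would verify compatibility: if $d$ is a face of $c$, the barycenter of a face $f$ of $c_{\vect{r}}$ that lies in $d$ is the same whether computed inside $c_{\vect{r}}$ or inside $d_{\vect{r}}$, because in either case it is the average of the same vertex set (which already lies in the affine span of $d$). Consequently $\mathrm{sd}(d_{\vect{r}})$ is precisely the subcomplex of $\mathrm{sd}(c_{\vect{r}})$ spanned by the barycenters of faces whose index sets contain $\tau$, so $\phi_c|_{d_{\vect{r}}} = \phi_d$. The collection $\{\phi_c\}$ therefore assembles (by the pasting lemma applied to the regular cell structure of $\Gamma^n_{\vect{r}}$) into a continuous bijection $\Phi : \Gamma^n_{\vect{r}} \to \Gamma^n_{\vect{s}}$, which is automatically a homeomorphism since both sides are compact Hausdorff.

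The main obstacle is the bookkeeping: one must track carefully that the ``same subset of $\Lambda^c$'' labelling really carries through all four identifications (going up from $d$ to a face of $c_{\vect{r}}$, across via Lemma \ref{polytope} to the corresponding face of $c_{\vect{s}}$, and back down to $d_{\vect{s}}$) so that $\phi_c$ and $\phi_d$ coincide on $d_{\vect{r}}$. The virtue of the barycentric-subdivision construction is that it makes this a formal consequence of the face-poset isomorphism rather than demanding further choices; any alternative scheme, such as a cone-from-interior-point extension by dimension, would require choosing interior points consistently across all cells, which is considerably messier.
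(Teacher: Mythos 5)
Your argument is correct and is, at bottom, the same proof the paper gives: the paper establishes the per-cell face-poset isomorphisms $\phi^c$, patches them into a global poset isomorphism $F_{\vect{r}} \to F_{\vect{s}}$ using the minimal cell containing each face, and then cites the standard fact (\cite{LW69}, \cite{BLSWZ99}) that a regular CW complex is homeomorphic to the order complex of its face poset, so isomorphic face posets give homeomorphic complexes. What you have done is unwind that citation: your barycenter-to-barycenter simplicial map and affine extension is exactly the realization of the order-complex isomorphism, and your compatibility check $\phi_c|_{d_{\vect{r}}} = \phi_d$ is the geometric counterpart of the paper's verification that $\phi^c$ restricts to $\phi^d$ on $F^d_{\vect{r}} \subset F^c_{\vect{r}}$ (both relying on the $\rho \mapsto \rho \cup \tau$ indexing). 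So the two proofs differ only in that yours carries out explicitly the construction the paper delegates to the cited lemma; the key inputs — Lemma \ref{polytope}, the uniform $\Lambda^c$-indexing of faces, and the principal-ideal inclusion $F^d_{\vect{r}} \hookrightarrow F^c_{\vect{r}}$ — are identical.
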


\begin{proof} Suppose $\vect{r} \sim \vect{s}$.  Then, for every cell $c$ of $\Gamma^n$, $T^c_{\vect{r}} = T^c = T^c_{\vect{s}}$.  We will show that the regular cell structures on $\Gamma^n_{\vect{r}}$ and $\Gamma^n_{\vect{s}}$ have isomorphic face posets.  This implies our result because the topology of a regular cell complex $X$ is determined by its face poset $F(X)$: $X$ is homeomorphic to its barycentric subdivision, i.e., the geometric realization of the order complex of F(X) (\cite{LW69} Ch 3 \S 1 and \cite{BLSWZ99} p200).

Let $F_{\vect{r}}$ denote the full face poset of the regular cell structure on $\Gamma^n_{\vect{r}}$.  For each cell $c$ of $\Gamma^n$, we have a poset isomorphism $\phi^c : F^c_{\vect{r}} \rightarrow F^c_{\vect{s}}$, which maps a face of $c_{\vect{r}}$ indexed by the set $\rho \subset \Lambda^c$ to the face of $c_{\vect{s}}$ indexed by $\rho$.  We will patch all of these maps together to produce an isomorphism $\phi : F_{\vect{r}} \rightarrow F_{\vect{s}}$.  

Let $f$ be a face of $\Gamma^n_{\vect{r}}$.  Let $c$ be the minimal cell of $\Gamma^n$ that contains $f$.  Then $f$ is a face of $c_{\vect{r}}$.  Define $\phi(f) := \phi^c(f)$.  This is well-defined: If $f$ is contained in two cells of the same dimension, it is also contained in their intersection, which is made up of lower dimensional cells, so there is a unique minimal cell containing $f$.  

We must show that $\phi$ is order-preserving.  First, we observe that if $d \subset c$ in $\Gamma^n$, then $\phi^c$ restricts to $\phi^d$ on the subposet $F^d_{\vect{r}}$ of $F^c_{\vect{r}}$.  Let $\tau \subset \Lambda^c$ be the index for the face of $c$ corresponding to $d$.  Then let $\rho \subset \Lambda^d$ be the index for some face $f$ of $d_{\vect{r}}$.  Then $\phi^d(f)$ is the face of $d_{\vect{s}}$ indexed by $\rho$.  Including this face into $F^c_{\vect{s}}$ gives the index $\rho \cup \tau$, which is the same as the index for $f$ in $F^c_{\vect{r}}$, meaning that $\phi^c(f) = \phi^d(f)$.

It is clearly one-to-one and onto since its inverse is defined in the same way using the inverses of each $\phi^c$.
\end{proof}

When $\vect{r} \sim \vect{s}$, we will denote by $\phi^{\vect{r}}_{\vect{s}}$ the homeomorphism from $\Gamma^n_{\vect{r}}$ to $\Gamma^n_{\vect{s}}$.

\begin{theorem}\label{Closed-Isotopy}
If $\vect{r} \sim \vect{s}$, then $\Gamma^n_{\vect{r}}$ is isotopic to $\Gamma^n_{\vect{s}}$ in $\Gamma^n$.
\end{theorem}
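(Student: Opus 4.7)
The idea is to construct an ambient piecewise-linear isotopy via the affine segment $\vect{r}_t := (1-t)\vect{r} + t\vect{s}$, $t \in [0,1]$. The equivalence class of $\vect{r}$ is the intersection, over all cells $c$ of $\Gamma^n$, of the sets $P_{T^c} = \{\vect{r}' \mid T^c_{\vect{r}'} = T^c_{\vect{r}}\}$; by the convexity lemma of Section \ref{polytopes}, each $P_{T^c}$ is convex, hence so is the equivalence class. Thus $\vect{r}_t$ lies in the same class for every $t$, and the face posets $F^c_{\vect{r}_t}$ are canonically identified with $F^c_{\vect{r}}$ for every cell $c$. Moreover, for each abstract vertex $v \in T^c$ and any basic $\beta \subseteq v$, the actual vertex $p^c_v(t) := \vect{A}_\beta^{-1}\vect{b}_\beta(\vect{r}_t)$ of $c_{\vect{r}_t}$ is an affine function of $t$ (as noted at the end of Section \ref{polytopes}).

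I will build an ambient isotopy $H \colon \Gamma^n \times [0,1] \to \Gamma^n$ with $H_0 = \mathrm{id}$ and $H_t(\Gamma^n_{\vect{r}}) = \Gamma^n_{\vect{r}_t}$ by assembling piecewise-linear maps on each maximal cell $c$ of $\Gamma^n$. For each such $c$, choose a triangulation $\mathcal{T}^c$ of $c$ that contains both $c_{\vect{r}}$ and $c_{\vect{s}}$ as subcomplexes; its vertices are the vertices of $c$, the vertices of $c_{\vect{r}}$ in the interior of $c$, and (if required to triangulate the complement $c \setminus \mathrm{int}\, c_{\vect{r}}$) additional barycentric points on faces of $c_{\vect{r}}$, each chosen via a fixed affine formula in the vertex data. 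Every vertex of $\mathcal{T}^c$ either lies on $\partial c$, where I declare $H^c_t$ to fix it, or is indexed by an abstract vertex $v$, where I send it along the affine trajectory $p^c_v(t)$. Extending affinely on each simplex of $\mathcal{T}^c$ defines $H^c_t \colon c \to c$. Since the target vertex configuration has the same combinatorial type as the source for every $t$ (by the type invariance along the segment), $H^c_t$ is a piecewise-linear homeomorphism.

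The cell-wise maps glue: external faces of $c$ are fixed throughout, and a vertex of $c_{\vect{r}}$ lying on $\partial c$ is also a vertex of $c'_{\vect{r}}$ for every neighboring maximal cell $c'$, with an affine trajectory determined intrinsically by its abstract-vertex label (independent of which maximal cell is used to compute it). Hence the $H^c_t$ agree on $c \cap c'$ and patch to a continuous family of homeomorphisms $H_t \colon \Gamma^n \to \Gamma^n$ with $H_0 = \mathrm{id}$ and $H_1(\Gamma^n_{\vect{r}}) = \Gamma^n_{\vect{s}}$.

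The main obstacle is coherently triangulating the complement $c \setminus \mathrm{int}\, c_{\vect{r}}$: $c_{\vect{r}}$ is a convex polytope inside the convex polytope $c$, and triangulating the region between them without introducing new vertices whose position is not already an affine function of the $p^c_v$ (and of $\vect{r}$) would break affinity of the isotopy. I expect to handle this by taking a canonical (e.g., pulling or iterated barycentric) subdivision whose auxiliary vertices are explicit affine combinations of the $p^c_v(t)$ and of the vertices of $c$, then checking that the resulting subdivision on $\partial c$ is intrinsic to the face of $\Gamma^n$ carrying it, so that the gluing across maximal cells of $\Gamma^n$ succeeds.
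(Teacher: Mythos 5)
Your plan has a genuine gap and a false premise, and it is also considerably harder than what the theorem requires.

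First, the convexity claim is wrong: the convexity lemma in Section \ref{polytopes} applies only when the type $T^c_{\vect{r}}$ is \emph{nonempty}. When $c_{\vect{r}} = \emptyset$, the set of parameters yielding an empty polytope need not be convex, and the paper explicitly warns (in the remark immediately following Theorem \ref{Closed-Isotopy}) that one can have $\vect{r} \sim \vect{s}$ with some $c_{\vect{r}} = c_{\vect{s}} = \emptyset$ while $c_{(1-t)\vect{r}+t\vect{s}} \neq \emptyset$ for intermediate $t$. So the equivalence class $[\vect{r}]$ is \emph{not} convex in general, and your assertion that $\vect{r}_t \sim \vect{r}$ for all $t$ fails. (This is harmless for the nonempty cells, whose types do stay constant along the segment, but it kills your claim that $H_t(\Gamma^n_{\vect{r}}) = \Gamma^n_{\vect{r}_t}$.)

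Second, you flag but do not resolve the central difficulty of your route: building a coherent family of triangulations of $c \setminus \mathrm{int}\, c_{\vect{r}}$ for every maximal cell $c$, with auxiliary vertices that depend affinely on $t$ and that agree across shared faces of adjacent maximal cells. This is a real technical obstacle, not a loose end; as written the ambient PL homeomorphisms $H^c_t$ are not actually constructed, so the gluing step has nothing to glue.

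Both problems stem from trying to prove more than is needed. The theorem asserts an isotopy of $\Gamma^n_{\vect{r}}$ in $\Gamma^n$, i.e., a family of embeddings $\Gamma^n_{\vect{r}} \hookrightarrow \Gamma^n$, not an ambient isotopy of all of $\Gamma^n$. The paper's proof is far more economical: it takes the homeomorphism $\phi^{\vect{r}}_{\vect{s}}$ from Theorem \ref{Closed-Homeo} and simply interpolates, setting $\Phi(x,t) := (1-t)x + t\phi^{\vect{r}}_{\vect{s}}(x)$ on $\Gamma^n_{\vect{r}}$ only. Injectivity of each $\Phi(-,t)$ is checked cell-by-cell: on each nonempty $c_{\vect{r}}$ this straight-line interpolation is exactly the affine map that carries the vertices of $c_{\vect{r}}$ to those of $c_{\vect{t}}$ (where $\vect{t} = (1-t)\vect{r} + t\vect{s}$), hence is a homeomorphism onto $c_{\vect{t}}$, and these homeomorphisms patch. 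No triangulation of the complement, no ambient extension, and no convexity of the equivalence class is needed. If you want to repair your argument, you should either drop the ambient-extension goal and follow this interpolation idea, or, if you insist on ambient PL extension, supply the coherent triangulations explicitly and replace the convexity claim with the weaker (correct) statement that only the \emph{nonempty} cell types are constant along the segment.
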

\begin{proof}
We define the function $\Phi \colon \Gamma^n_{\vect{r}} \times [0,1] \rightarrow \Gamma^n$ by $\Phi(x,t) := (1-t)x + t \phi^{\vect{r}}_{\vect{s}}(x)$.  Clearly, $\Phi(-,0)$ is the inclusion of  $\Gamma^n_{\vect{r}}$ into $\Gamma^n$, and $\Phi(-,1)$ is the homeomorphism $\phi^{\vect{r}}_{\vect{s}}$.  It remains to show that each $\Phi(-,t)$ is one-to-one.

For each maximal cell $c$ of $\Gamma^n$, $c_{\vect{r}}$ and $c_{\vect{s}}$ are homeomorphic via the restriction $\phi^{\vect{r}}_{\vect{s}} |_{c_{\vect{r}}}$.  If $\vect{t} := (1-t)\vect{r} + t\vect{s}$, we have that $c_{\vect{r}}$ (if it is nonempty) is homeomorphic to $c_{\vect{t}}$ via the map $x \mapsto (1-t)x + t\phi^{\vect{r}}_{\vect{s}} |_{c_{\vect{r}}}(x)$ because that is the map for the vertices.

By piecing together these maps $x \mapsto (1-t)x + t\phi^{\vect{r}}_{\vect{s}} |_{c_{\vect{r}}}(x)$ for all maximal cells $c$ for which $c_{\vect{r}} \neq \emptyset$, we can produce a one-to-one map which we will call $\phi^{\vect{r}}_{\vect{t}}$.  We remark that it is not necessarily true that $\vect{t} \sim \vect{r}$; there may be a cell $c$ for which $c_{\vect{r}}$ is empty but $c_{\vect{t}}$ is not.  If $\vect{t} \sim \vect{r}$, the map we just defined is the same as the homeomorphism $\phi^{\vect{r}}_{\vect{t}}$ given by the combinatorial equivalence.

Finally, it is a simple observation that $\Phi(x,t) = \phi^{\vect{r}}_{\vect{t}}(x)$ for any $x \in \Gamma^n_{\vect{r}}$.
\end{proof}

Here we make a side remark about the issue of convexity of an equivalence class $[\vect{r}]$ in $\Omega$.  For each cell $c$ of $\Gamma^n$, the set of parameters $\vect{r}$ yielding a specific nonempty type for $c_{\vect{r}}$ is convex.  However, some $c_{\vect{r}}$ will be empty.  If $\vect{r} \sim \vect{s}$, then the nonempty cells will not change type  along the line segment between $\vect{r}$ and $\vect{s}$ in $\Omega$.  However, if $c_{\vect{r}}$ is empty, it may be happen that, for some $t \in (0,1)$, $c_{(1-t)\vect{r} + t \vect{s}}$ is nonempty.  On the other hand, if $\vect{r} \sim \vect{s}$ and $s_{ij} \geq r_{ij}$ for all $\{i,j\}$, then $(1-t)\vect{r} + t \vect{s} \sim \vect{r}$ for all $t \in [0,1]$ because $c_{(1-t)\vect{r} + t \vect{s}} \subseteq c_{\vect{r}}$ for all cells $c$ of $\Gamma^n$.

\section{Example: The Cells of $\Gamma^2_r$}\label{example}

Here we consider the case of $\Gamma^2_r = \{(x,y) \in \Gamma^2 \, | \, \delta(x,y) \geq r\}$ to illustrate our approach in contrast to that of Deeley in \cite{Deeley11b}.  We remark first that Deeley does not allow multiple edges between a pair of nodes of $\Gamma$.  Further, he subdivides the graph $\Gamma$ so that each edge gives the shortest distance between its endpoints.  Our approach allows multiple edges and requires no subdivision of the graph.  Finally, Deeley must subdivide the $2$--cells of $\Gamma^2$ in order to achieve a uniform distance function on each cell.  For us, the distance is the minimum of four functions.  

We realize the cells of $\Gamma^2_r$ as parametric polytopes.  There are two types of maximal cells of $\Gamma^2$: cells where the points $x$ and $y$ lie on the same edge of $\Gamma$ and cells where they lie on distinct edges.  To produce a maximal cell of the subspace $\Gamma^2_r$, we intersect $\Gamma^2_r$ with one of these cells of $\Gamma^2$.

For the first case, let $e$ be an edge with length $L_e$ and endpoints $a$ and $b$.  Then $\Gamma^2$ has two $2$--cells in which both $x$ and $y$ lie on $e$.  They can be identified with the simplices $\{(x,y) \in [0,L_e]^2 \, | \, x \leq y\}$ and $\{(x,y) \in [0,L_e]^2 \, | \, y \leq x\}$.  We will consider the intersection of the first of these (call it $c$) with $\Gamma^2_r$; the case of the second is symmetric.

Initially, the cell $c$ of $\Gamma^2$ is defined by the following inequalities in $\R^2$:
\begin{enumerate}
\item[$C^x_1$:] $-x \leq 0$,
\item[$C^x_2$:] $x \leq L_e$,
\item[$C^y_1$:] $-y \leq 0$,
\item[$C^y_2$:] $y \leq L_e$,
\item[$E^{xy}_1$:] $x - y \leq 0$.
\end{enumerate}
The inequalities labeled by $C^x_2$ and $C^y_1$ are redundant and can be ignored.

The cell $c_r$ of $\Gamma^2_r$ is the intersection of $c$ with $\Gamma^2_r$ and is given by the addition of the following two inequalities:
\begin{enumerate}
\item[$E^{xy}_2$:] $x - y \leq -r$,
\item[$E^{xy}_3$:] $-x + y \leq -r + \delta(a,b) + L_e$.
\end{enumerate}
The first of these ensures that the piece of $e$ between $x$ and $y$ has length at least $r$.  The second ensures that a shortest path from $x$ to $y$ passing through $a$ and $b$ has length at least $r$.   One of these two lengths is $\delta(x,y)$.

A candidate for a vertex is given by picking two of these inequalities whose left hand sides are linearly independent and changing them both to equations.  The solution to this pair of equations is a potential vertex.  To see if it is an actual vertex of $c_r$, we check to see if it satisfies all of the remaining inequalities.

\begin{figure}
\includegraphics[scale=.7]{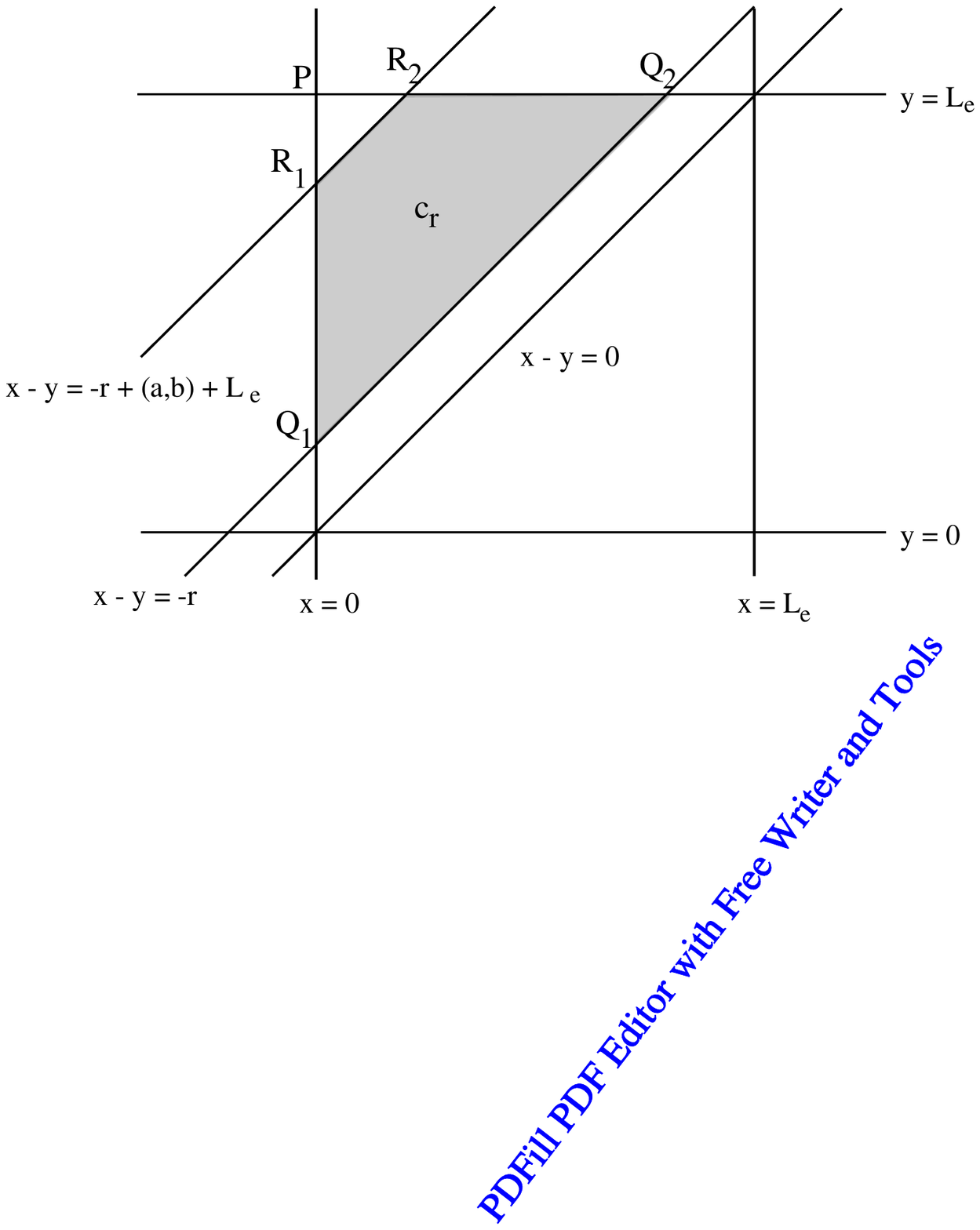}
\caption{$e \times e, x \leq y$ \label{fig1}}
\end{figure}

Choosing $C^x_1$ and $E^{xy}_1$ gives the candidate $(0,0)$, which satisfies the remaining inequalities exactly when $r = 0$.  Likewise, the candidate $(L_e,L_e)$ given by $E^{xy}_1$ and $C^y_2$ is an actual vertex only when $r = 0$.  

Choosing $C^x_1$ and $C^y_2$ gives $P = (0,L_e)$, which satisfies the others for all $r \leq \delta(a,b)$.  

Choosing $C^x_1$ and $E^{xy}_2$ gives $Q_1 = (0,r)$.  This point is an actual vertex for $0 \leq r \leq \frac{\delta(a,b) + L_e}{2}$.  Note that when $r = 0$, this vertex is the same as the vertex $(0,0)$ given by $C^x_1$ and $E^{xy}_1$.  Similarly, the point $Q_2 = (L_e - r, L_e)$ given by $C^y_2$ and $E^{xy}_2$ is a vertex on the same interval and is the same as $(L_e,L_e)$ when $r = 0$.  If $r = \frac{\delta(a,b) + L_e}{2}$ and it happens in $\Gamma$ that $\delta(a,b) = L_e$, then both $Q_1$ and $Q_2$ are the same vertex as $P$.

Finally, choosing $C^x_1$ and $E^{xy}_3$ gives the candidate $R_1 = (0, \delta(a,b) + L_e - r)$, which is a vertex when $\delta(a,b) \leq r \leq \frac{\delta(a,b) + L_e}{2}$.  We also have the vertex $R_2 = (r - \delta(a,b) ,L_e)$ on the same interval given by $C^y_2$ and $E^{xy}_3$.  When $r = \delta(a,b)$, these two vertices are the same as $P$, and when $r = \frac{\delta(a,b) + L_e}{2}$, they are identified with the respective vertices $Q_1$ and $Q_2$.

When $r > \frac{\delta(a,b) + L_e}{2}$, the intersection is empty, so there is no cell $c_r$ for $\Gamma^2_r$.  

Thus, for this type of cell, there are at most $2$ critical values of $r > 0$ that signify changes in the combinatorial type of $c_r$.  We remark that, for an edge $e$ having shortest length in $\Gamma$, the cell arising from $e \times e$ has only one critical value for $r$:  For such an edge, $\delta(a,b) = L_e$ and the potential vertices $R_1$ and $R_2$ in Figure \ref{fig1} will never be actual vertices separate from $Q_1$ and $Q_2$.

We now consider the other type of cell in $\Gamma^2$.  When the points $x$ and $y$ lie on distinct edges $e_1$ and $e_2$ respectively, $(x,y)$ is in the cell $c$ of $\Gamma^2$ defined by the following inequalities in $\R^2$:
\begin{enumerate}
\item[$C^x_1$:] $-x \leq 0$,
\item[$C^x_2$:] $x \leq L_{e_1}$,
\item[$C^y_1$:] $-y \leq 0$,
\item[$C^y_2$:] $y \leq L_{e_2}$.
\end{enumerate}
The cell $c_r = c \cap \Gamma^2_r$ is determined by those and the following (where $a_i$ and $b_i$ are the endpoints of $e_i$ corresponding to $0$ and $L_{e_i}$):
\begin{enumerate}
\item[$D^{xy}_1$:] $-x - y \leq -r + \delta(a_1,a_2)$,
\item[$D^{xy}_2$:] $-x + y \leq -r + \delta(a_1,b_2) + L_{e_2}$,
\item[$D^{xy}_3$:] $x - y \leq -r + \delta(b_1,a_2) + L_{e_1}$,
\item[$D^{xy}_4$:] $x + y \leq -r + \delta(b_1,b_2) + L_{e_1} + L_{e_2}$.
\end{enumerate}
The first of these ensures that any path from $x$ to $y$ passing through $a_1$ and $a_2$ has length longer than $r$.  The second ensures that one passing through $a_1$ and $b_2$ has sufficient length and so forth.  Together, the four inequalities guarantee that $\delta(x,y) \geq r$.

\begin{figure}
\includegraphics[scale=.7]{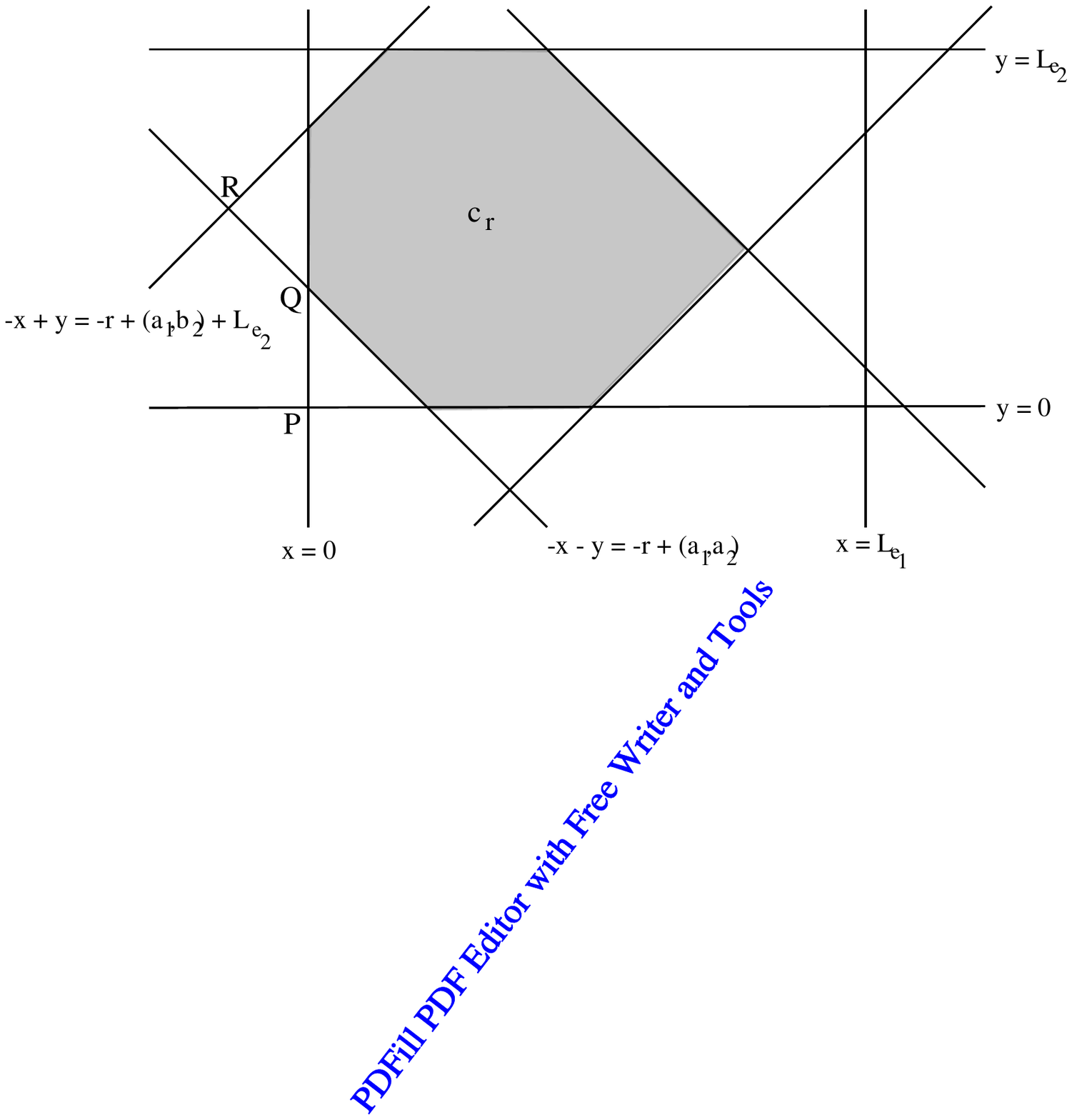}
\caption{$e_1 \times e_2$ \label{fig2}}
\end{figure}

There are three kinds of potential vertices of $c_r$, exemplified by $P$, $Q$, and $R$ in Figure \ref{fig2}.  The point $P = (0,0)$ is an actual vertex when $0 \leq r \leq \delta(a_1,a_2)$.  The point $Q = (0, r - \delta(a_1, a_2))$ is a vertex when $\delta(a_1,a_2) \leq r \leq \frac{\delta(a_1,a_2) + \delta(a_1,b_2) + L_{e_2}}{2}$, and $R = (r - \frac{\delta(a_1,a_2) + \delta(a_1,b_2) + L_{e_2}}{2},\frac{\delta(a_1,b_2) + L_{e_2} - \delta(a_1,a_2)}{2})$ is a vertex when $\frac{\delta(a_1,a_2) + \delta(a_1,b_2) + L_{e_2}}{2} \leq r \leq \min \{\frac{\delta(a_1,b_2) + \delta(b_2,a_1) + L_{e_1} + L_{e_2}}{2}, \frac{\delta(a_1,a_2) + \delta(b_1,b_2) + L_{e_1} + L_{e_2}}{2}\}$.  When $r = \delta(a_1,a_2)$, $P$ and $Q$ are identified.  When $r = \frac{\delta(a_1,a_2) + \delta(a_1,b_2) + L_{e_2}}{2}$, $Q$ and $R$ are identified.  We can analyze all of the other potential vertices by using the symmetries of interchanging $e_1$ and $e_2$ and/or reversing their orientations.  In total, there are at most $9$ critical values of $r$ which result in identifications between vertices and signify changes in the combinatorial type of $c_r$ as $r$ increases. The smallest of these will give the same type as $r = 0$.

We will now consider how the homeomorphism and homotopy types of $\Gamma^2_r$ change at a critical value of the restraint parameter $r$.  

\begin{figure}
\includegraphics[scale=.7]{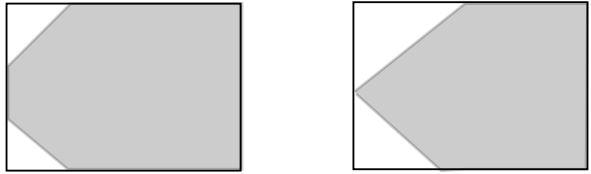}
\caption{A cell $c_r$  before and at a critical value \label{fig3}}
\end{figure}

The shaded regions in Figure \ref{fig3} represent the intersections of the rectangular cell $e_1 \times e_2$ of $\Gamma^2$ with two different restricted configuration spaces, the one on the right corresponding to a critical value for the restraint parameter and the one on the left corresponding to a slightly smaller value.  At this critical value, the part of the cell $c_r$ lying on $a_1 \times e_2$ (where $a_1$ is the initial node of the edge $e_1$) collapses to a single vertex.  The homeomorphism types of the two spaces are different, but the homotopy type remains the same.

\begin{figure}
\includegraphics[scale=.7]{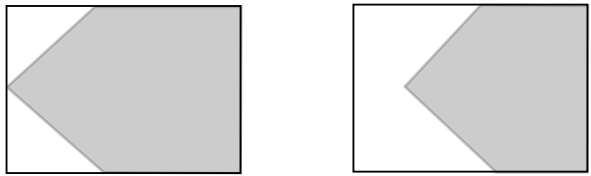}
\caption{A cell $c_r$  at and after a critical value \label{fig4}}
\end{figure}

In Figure \ref{fig4}, we see what happens to the cell $c_r$ immediately after the critical value.  The combinatorial type of this individual cell remains the same, but it no longer intersects $a_1 \times e_2$.  Both the homeomorphism and homotopy types (possibly) change.

\begin{figure}
\includegraphics[scale=.7]{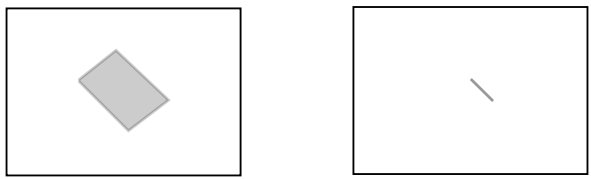}
\caption{Another type of critical value \label{fig5}}
\end{figure}

Figure \ref{fig5} shows the behavior of the cell $c_r$ at a larger critical value. Immediately before this critical value, the cell is already strictly contained by the interior of $e_1 \times e_2$, and it collapses to a lower dimension when $r$ reaches the critical value.  Again, the homeomorphism type changes, but the homotopy type does not.  Immediately after, the cell vanishes, changing the homotopy type.

As these examples show, the number of homotopy types for $\{ \Gamma^2_r\}_{r > 0}$ is bounded above by the number of critical values of $r$ (plus $1$ if we count the empty set as a homotopy type).  The number of homeomorphism (and isotopy) types is bounded above by twice the number of critical values (plus $1$ if we include the empty set).  Accordingly, we will now find a bound on the number of critical values of $r > 0$. 

For each edge $e$ of $\Gamma$, $e \times e$ is subdivided into two maximal cells, but their intersections with $\Gamma^2_r$ will have the same combinatorial type.  Thus, each edge of $\Gamma$ can contribute at most $2$ critical values as we saw in the first case of this section, and the shortest edge will only contribute $1$ critical value.  Likewise, the cells in $\Gamma^2_r$ coming from $e_1 \times e_2$ and $e_2 \times e_1$ will have the same type.  Thus, any choice of two distinct edges of $\Gamma$ can contribute at most $9$ critical values as we saw in the second case.  

If $E$ is the number of edges in $\Gamma$, the number of critical values of $r > 0$ is at most $9 \binom{E}{2} + 2E - 1 = \frac{9}{2}E^2 - \frac{5}{2}E - 1$.  Hence, including the empty set, the number of homotopy types for $\{\Gamma^2_r\}$ is bounded above by $\frac{9}{2}E^2 - \frac{5}{2}E$, and the number of isotopy types is bounded above by $9E^2 - 5E - 1$.

We also observe that the smallest nonzero critical value is the length $L_e$ of the shortest edge $e$ of $\Gamma$.  If $0 < r, s < L_e$, then $\Gamma^2_r$ is isotopic to $\Gamma^2_s$.  (Additionally, by our results in Section \ref{strict}, both are strong deformation retracts of the configuration space $\Gamma^{\underline{2}}$.)

\section{Polynomial Upper Bounds on the Isotopy Types of $\Gamma^n_{\vect{r}}$}\label{bound}

We can now establish the polynomial upper bound on the isotopy types of $\Gamma^n_{\vect{r}}$.  Let $\Omega$ be a $d$--dimensional affine subspace of the parameter space $\R^{\binom{n}{2}}$.  Then we have an affine map $\R^d \rightarrow \Omega \subseteq \R^{\binom{n}{2}}$.

In the full parameter space $\R^{\binom{n}{2}}$, different combinatorial types of a cell $c_{\vect{r}}$ are separated by hyperplanes of the form: \begin{equation*}A_{\lambda}\vect{A}_{\beta}^{-1}\vect{b}_{\beta}(\vect{r}) = b_{\lambda}(\vect{r})\end{equation*} where $\beta \subseteq \Lambda^c$ corresponds to some basis of $\R^n$ and $\lambda \notin \beta$.  If we restrict the parameters to $\Omega$, different combinatorial types are separated by preimages of these hyperplanes in $\R^d$.  Thus, we can find an upper bound to the number of types for $c_{\vect{r}}$ by counting the number of faces in the arrangement of all such hyperplanes in the parameter space $\R^d$.

If $\Gamma$ has $E$ edges, then $\Gamma^n$ has $E^{\bar{n}} = E(E+1)\cdots(E+n-1)$ $n$--cells.  If $c$ is an $n$--cell of $\Gamma^n$, recall that  $\{A_{\lambda}x\}_{\lambda \in \Lambda^c} \subseteq \{\pm x_i\}^n_{i=1} \cup \{\pm x_i \pm x_j\}_{1 \leq i < j \leq n}$.  Each $n$--cell $c$ contributes hyperplanes indexed by pairs $(\beta, \lambda)$, where $\beta \subset \Lambda^c$ corresponds to a basis for $\R^n$ and $\lambda \in \Lambda^c \setminus \beta$.  Therefore, an $n$-cell $c$ of $\Gamma^n$ will contribute at most $h_n := 2^n \binom{n^2}{n}(2n^2 - n)$ hyperplanes, so in all, there are $h_n E^{\bar{n}}$ hyperplanes in the parameter space $\R^d$.

\begin{lemma}
Let $f_d(k)$ be the number of faces of an arrangement of $k$ hyperplanes in $\R^d$.  Then $f_d(k) \leq 2^d \binom{k}{d} + 2^{d-1}\binom{k}{d-1} + \cdots + 2^0 \binom{k}{0}$.
\end{lemma}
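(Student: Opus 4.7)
The plan is to prove the bound by induction on $k$ (and implicitly on $d$), using the recursion
\begin{equation*}
f_d(k) \leq f_d(k-1) + 2\, f_{d-1}(k-1),
\end{equation*}
together with Pascal's identity. For the base cases, $f_d(0)=1 = \binom{0}{0}$ since no hyperplanes gives the single face $\R^d$, and $f_0(k)=1$ for all $k$ since $\R^0$ is a point.

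The key geometric step is establishing the recursion. Given an arrangement $\mathcal{A}$ of $k$ hyperplanes in $\R^d$, single out one hyperplane $H$ and let $\mathcal{A}' := \mathcal{A}\setminus\{H\}$. I would classify each face $F$ of $\mathcal{A}'$ by its position relative to $H$: either $F$ lies in the closed half-space $H^+$ or $H^-$, in which case $F$ is still a face of $\mathcal{A}$, or the open hyperplane $H$ meets the relative interior of $F$, in which case $F$ splits into the three faces $F\cap H^+$, $F\cap H$, $F\cap H^-$ of $\mathcal{A}$. Writing $N$ for the number of faces of $\mathcal{A}'$ of the second type, one obtains the exact count
\begin{equation*}
|\text{faces of } \mathcal{A}| \;=\; |\text{faces of } \mathcal{A}'| + 2N.
\end{equation*}
The assignment $F\mapsto F\cap H$ is a bijection between these $N$ faces and the faces of the induced arrangement $\mathcal{A}'|_H$, which sits inside $H\cong\R^{d-1}$ and has at most $k-1$ hyperplanes. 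Hence $N\leq f_{d-1}(k-1)$, and the recursion follows.

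Once the recursion is in hand, the rest is bookkeeping. Assuming inductively that $f_d(k-1)\leq\sum_{i=0}^{d}2^i\binom{k-1}{i}$ and $f_{d-1}(k-1)\leq\sum_{i=0}^{d-1}2^i\binom{k-1}{i}$, I would compute
\begin{equation*}
f_d(k) \;\leq\; \sum_{i=0}^{d}2^{i}\binom{k-1}{i} + \sum_{i=0}^{d-1}2^{i+1}\binom{k-1}{i} \;=\; \binom{k-1}{0} + \sum_{i=1}^{d}2^{i}\left[\binom{k-1}{i}+\binom{k-1}{i-1}\right],
\end{equation*}
and apply Pascal's identity to collapse the bracket to $\binom{k}{i}$, yielding exactly $\sum_{i=0}^{d}2^{i}\binom{k}{i}$.

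The only real obstacle is getting the face-counting identity right: one must be careful that the trichotomy (lying in $H^+$, in $H^-$, or split by $H$) is genuinely exhaustive and that faces $F\subseteq H$ of $\mathcal{A}'$ are correctly accounted for (they fall into both $H^+$ and $H^-$ but are counted once as faces of $\mathcal{A}$, and contribute to $\mathcal{A}'|_H$ without being split). Once this is handled cleanly, the inductive computation is a routine application of Pascal's identity.
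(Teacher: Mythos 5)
Your proof is correct and follows essentially the same route as the paper: delete one hyperplane $H$, use the recursion $f_d(k) \leq f_d(k-1) + 2f_{d-1}(k-1)$ obtained by counting faces of $\mathcal{A}'$ split by $H$ against faces of the induced arrangement in $H$, then close the induction with Pascal's identity. The only difference is that you spell out the geometric justification of the recursion (the paper asserts it) and the binomial bookkeeping, and you explicitly supply the $f_0(k)=1$ base case that the paper leaves implicit.
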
 
\begin{proof}
We use induction on $k + d$, $d > 0, k \geq 0$.  For $d > 0$, we have $f_d(0) = 1 = 2^d\binom{0}{d} + \cdots + 2^1\binom{0}{1} + 2^0\binom{0}{0}$.

Suppose $\mathcal{H}_{d}(k+1)$ is an arrangement of $k + 1$ hyperplanes in $\R^d$ with $f_d(k+1)$ faces.  Choose one hyperplane $H$.  Removing $H$ produces an arrangement $\mathcal{H}_{d}(k)$ of $k$ hyperplanes with $f_d(k)$ faces.  Also, intersecting each hyperplane of $\mathcal{H}_d(k)$ with $H$ induces an arrangement $\mathcal{H}_{d-1}(k)$ of (at most) $k$ hyperplanes in $\R^{d-1}$ with $f_{d-1}(k)$ faces.  The hyperplane $H$ will separate some faces of $\mathcal{H}_d(k)$ into three faces of $\mathcal{H}_d(k+1)$.   Therefore, $f_d(k+1) \leq f_d(k) + 2f_{d-1}(k)$.  By the induction hypothesis, we have that $f_d(k+1) \leq (2^d\binom{k}{d} + \cdots + 1) + 2(2^{d-1}\binom{k}{d-1} + \cdots + 1) = 2^d \binom{k+1}{d} + \cdots + 2\binom{k+1}{1} + 1$.  
\end{proof}

Setting $k = h_n E^{\bar{n}}$, we have that $f_d(h_n E^{\bar{n}})$ is a polynomial in $E$ of degree $n d$.  This yields the following:

\begin{theorem}\label{maintheorem}
Let $\Omega$ be a $d$--dimensional affine subspace of $\R^{\binom{n}{2}}$.  The number of isotopy types of $\{\Gamma^n_{\vect{r}}\}_{\vect{r} \in \Omega}$ is bounded above by a polynomial of degree $n d$ in the number of edges of $\Gamma$.
\end{theorem}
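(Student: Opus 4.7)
The plan is to assemble the theorem from the machinery already developed: Theorem \ref{Closed-Isotopy} reduces counting isotopy types to counting equivalence classes in $\Omega$, and the equivalence relation is determined by the abstract vertex sets $T^c_{\vect{r}}$ for the finitely many cells $c$ of $\Gamma^n$. So my strategy is to exhibit a finite real hyperplane arrangement $\mathcal{H}$ in $\Omega \cong \R^d$ whose open faces are contained in equivalence classes, then bound the number of faces of $\mathcal{H}$ via the preceding lemma.

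First I would observe that for any fixed $n$-cell $c$ of $\Gamma^n$ and any basic subset $\beta \subseteq \Lambda^c$, the potential vertex $\vect{A}_\beta^{-1}\vect{b}_\beta(\vect{r})$ is an affine function of $\vect{r}$, since $\vect{A}_\beta$ depends only on $c$ while each coordinate of $\vect{b}_\beta$ is affine in $\vect{r}$. Hence each condition $A_\lambda \vect{A}_\beta^{-1}\vect{b}_\beta(\vect{r}) \leq b_\lambda(\vect{r})$ determining whether a potential vertex is actual is an affine inequality in $\vect{r}$, and its boundary is an affine hyperplane in $\R^{\binom{n}{2}}$. Pulling back through the inclusion $\Omega \hookrightarrow \R^{\binom{n}{2}}$ yields an affine hyperplane (or all of $\Omega$) in $\R^d$. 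As $\vect{r}$ varies within a single open face of the resulting arrangement in $\Omega$, the sign of each such inequality is constant, so $S_{\vect{r}}$, and consequently each $T^c_{\vect{r}}$, remains constant. Thus every open face lies inside a single equivalence class, and the number of equivalence classes meeting $\Omega$ is at most the number of faces of this arrangement.

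Next I would count the hyperplanes. By the paragraph preceding the lemma, each of the $E^{\bar n}$ maximal cells of $\Gamma^n$ contributes at most $h_n = 2^n\binom{n^2}{n}(2n^2 - n)$ hyperplanes, so the total $k$ is at most $h_n E^{\bar n}$, a polynomial in $E$ of degree exactly $n$ (with $h_n$ a constant depending only on $n$). Lower-dimensional cells do not add new hyperplanes, since their defining inequalities are inherited subsets of those of the maximal cells containing them. Applying the lemma, the number of faces of the arrangement is at most
\begin{equation*}
f_d(h_n E^{\bar n}) \;\leq\; \sum_{i=0}^{d} 2^i \binom{h_n E^{\bar n}}{i},
\end{equation*}
which is a polynomial in $E$ of degree $nd$. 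Combining with Theorem \ref{Closed-Isotopy}, which provides the isotopy $\Gamma^n_{\vect{r}} \simeq \Gamma^n_{\vect{s}}$ whenever $\vect{r} \sim \vect{s}$, yields the desired polynomial bound.

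The only step requiring any genuine care is the verification that the combinatorial types really are separated by \emph{affine} (as opposed to higher-degree algebraic) loci; this rests on the affine dependence of $\vect{b}_\beta(\vect{r})$ on $\vect{r}$ and on the fact that the matrix $\vect{A}_\beta$ is independent of $\vect{r}$, so I would state this explicitly. Everything else is bookkeeping: counting maximal cells of $\Gamma^n$, counting (basic set, remaining index) pairs per cell, and invoking the face-counting lemma.
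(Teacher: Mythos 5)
Your proposal follows the paper's proof essentially verbatim: both reduce to counting equivalence classes via Theorem \ref{Closed-Isotopy}, observe that the combinatorial types of the cells $c_{\vect{r}}$ are cut out by the affine hyperplanes $A_\lambda \vect{A}_\beta^{-1}\vect{b}_\beta(\vect{r}) = b_\lambda(\vect{r})$ restricted to $\Omega \cong \R^d$, count at most $h_n E^{\bar n}$ such hyperplanes over the $E^{\bar n}$ maximal cells, and invoke the face-counting lemma to get degree $nd$ in $E$. The only thing to tidy is the phrase "so $S_{\vect{r}}$, and consequently each $T^c_{\vect{r}}$, remains constant": constancy of $T^c_{\vect{r}}$ does not follow from constancy of $S_{\vect{r}}$ alone, but rather from constancy of the full sign vector (in $\{-,0,+\}$), which fixes each $v_{\vect{r}}(\beta)$ as well as $S_{\vect{r}}$ -- a point worth stating precisely since it is exactly what makes the faces of the arrangement the right strata.
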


In particular, the number of isotopy types in $\{\Gamma^n_r\}_{r\geq 0}$ is bounded above by a  polynomial of degree $n$.  When $n = 2$, the following example gives an infinite family of metric graphs with a quadratic lower bound, showing that we cannot do better.

\begin{figure}
\includegraphics[scale=.7]{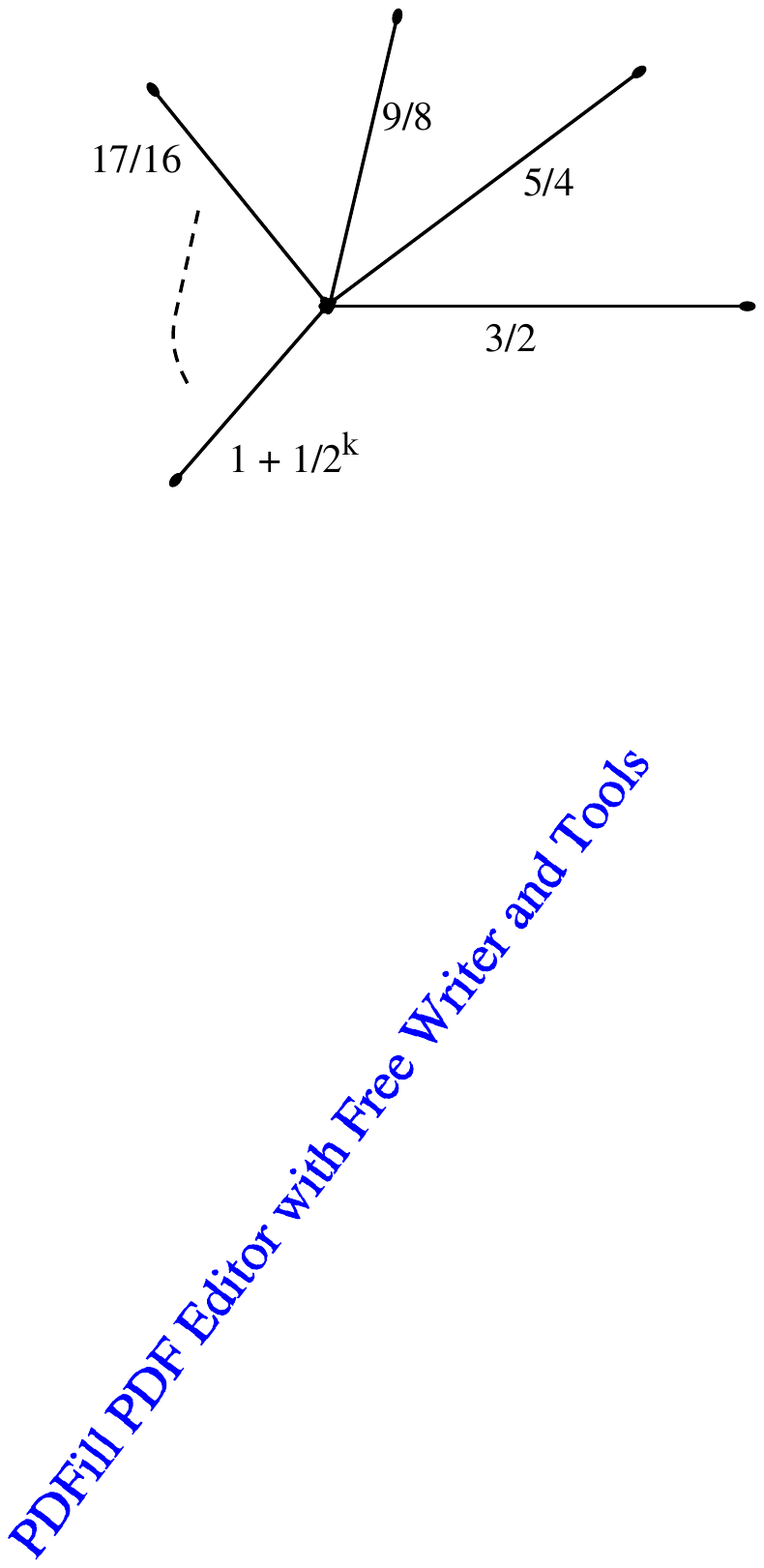}
\caption{$\Gamma$, the $k$--corolla \label{fig6}}
\end{figure}

\begin{exa}\label{corolla}
Let $\Gamma$ be the tree with $k$ edges, $e_1, \ldots, e_k$, emanating from a single node with lengths given by $L_{e_i} = 1 + \frac{1}{2^i}$.  Then $\{\Gamma^2_r\}_{r\geq0}$ has at least $\binom{k}{2}$ homotopy types (and hence isotopy types).
\end{exa}
\begin{proof}
We restrict ourselves to $r > L_{e_1} = \frac{3}{2}$.  A configuration $(x_1,x_2) \in \Gamma^2_r$ must have the two particles $x_1$ and $x_2$ positioned on distinct edges.  Neither particle can be moved through the common node while the configuration remains in $\Gamma^2_r$.  Thus, any nonempty maximal cell of $\Gamma^2_r$ is a path component.  

We now determine how many such path components there are.  A cell $c_r = (e_i \times e_j) \cap \Gamma^2_r$ is nonempty if and only if $r \leq L_{e_i} + L_{e_j}$.  Since, by construction, each sum $L_{e_i} + L_{e_j}$ is distinct and greater than $L_{e_1}$, different choices of $\{i,j\}$ yield distinct numbers of path components (and therefore distinct homotopy types) for $\Gamma^2_{L_{e_i} + L_{e_j}}$.  
\end{proof}

\section{Strict Proximity Restraints}\label{strict}

Let $I$ be a fixed collection of pairs $\{i, j\}$ from $\{1, \ldots, n\}$.  We now define the space:

\begin{equation*}
{}_I \Gamma^n_{\vect{r}} := \left\{(x_1, \ldots, x_n) \in \Gamma^n \, | \begin{array}{lr} \delta(x_i,x_j) \geq r_{ij}, & \{i,j\} \notin I\\
\delta(x_i,x_j) > r_{ij}, & \{i,j\} \in I 
\end{array}\right \}.
\end{equation*}

Define $\Omega_I := \left\{\vect{u} \in \R^{\binom{n}{2}} \, | \begin{array}{lr} u_{ij} = 0, & \{i,j\} \notin I\\
u_{ij} > 0, & \{i,j\} \in I 
\end{array}\right \}.$

For given $\vect{r} \in \Omega$ and $\vect{u} \in \Omega_I$, define a map $d = d_{\vect{r},\vect{u}} \colon {}_I\Gamma^n_{\vect{r}} \rightarrow (0,\infty)$ by $d(x) := \min_{\{i,j\}\in I} \frac{\delta(x_i,x_j) - r_{ij}}{u_{ij}}$.

\begin{theorem}\label{DefRetract}
Let $\vect{u} \in \Omega_I$.  Then, for sufficiently small $\epsilon > 0$, $\Gamma^n_{\vect{r} + \epsilon \vect{u}}$ is a strong deformation retract of ${}_I \Gamma^n_{\vect{r}}$.
\end{theorem}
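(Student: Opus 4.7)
The plan is to recognize $\Gamma^n_{\vect{r}+\epsilon\vect{u}}$ as the superlevel set $\{x : d(x)\geq\epsilon\}$ inside ${}_I\Gamma^n_{\vect{r}} = \{x : d(x)>0\}$, and then build the deformation out of the isotopies provided by Theorem \ref{Closed-Isotopy}. For $\{i,j\}\notin I$ the inequality $\delta(x_i,x_j) \geq r_{ij}+\epsilon u_{ij}$ reduces to $\delta(x_i,x_j) \geq r_{ij}$ since $u_{ij}=0$, while for $\{i,j\}\in I$ it says $\tfrac{\delta(x_i,x_j)-r_{ij}}{u_{ij}} \geq \epsilon$. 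Hence $\Gamma^n_{\vect{r}+\epsilon\vect{u}} = \{x \in {}_I\Gamma^n_{\vect{r}} : d(x) \geq \epsilon\}$.

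First I would fix a small enough $\epsilon$. The ray $\{\vect{r}+s\vect{u}\,|\,s>0\}$ crosses boundaries of equivalence classes of the relation of Section \ref{cells} at only finitely many values of $s$, corresponding to intersections with the hyperplanes of the arrangement in Section \ref{bound}. Choosing $\epsilon$ strictly less than the smallest such crossing, all parameters $\vect{r}+s\vect{u}$ with $s\in(0,\epsilon]$ lie in a common equivalence class; Theorem \ref{Closed-Isotopy} then furnishes a family of homeomorphisms $\phi^{\vect{r}+s\vect{u}}_{\vect{r}+\epsilon\vect{u}}$ determined on each maximal cell of $\Gamma^n$ by the shared combinatorial type.

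With $s(x) := \min(d(x), \epsilon)$, which is continuous and strictly positive on ${}_I\Gamma^n_{\vect{r}}$, so that $x \in \Gamma^n_{\vect{r}+s(x)\vect{u}}$, define
\begin{equation*}
\Phi \colon {}_I\Gamma^n_{\vect{r}} \times [0,1] \rightarrow {}_I\Gamma^n_{\vect{r}}, \qquad \Phi(x,t) := (1-t)x + t\,\phi^{\vect{r}+s(x)\vect{u}}_{\vect{r}+\epsilon\vect{u}}(x).
\end{equation*}
Because $x$ and $\phi^{\vect{r}+s(x)\vect{u}}_{\vect{r}+\epsilon\vect{u}}(x)$ lie in the same maximal cell of $\Gamma^n$, the convex combination makes sense inside that cell. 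Mimicking the vertex-level argument in the proof of Theorem \ref{Closed-Isotopy}, one sees that $\Phi(x,t) \in \Gamma^n_{\vect{r}+((1-t)s(x)+t\epsilon)\vect{u}} \subseteq {}_I\Gamma^n_{\vect{r}}$. The three defining properties then follow immediately: $\Phi(x,0)=x$; $\Phi(x,1)=\phi^{\vect{r}+s(x)\vect{u}}_{\vect{r}+\epsilon\vect{u}}(x)\in\Gamma^n_{\vect{r}+\epsilon\vect{u}}$; and for $a\in\Gamma^n_{\vect{r}+\epsilon\vect{u}}$ one has $d(a)\geq\epsilon$, whence $s(a)=\epsilon$, $\phi^{\vect{r}+\epsilon\vect{u}}_{\vect{r}+\epsilon\vect{u}}=\mathrm{id}$, and $\Phi(a,t)=a$ for all $t$.

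The main obstacle is the joint continuity of $\Phi$. Within each maximal cell $c$, the vertices $\vect{A}_\beta^{-1}\vect{b}_\beta(\vect{r}+s\vect{u})$ of $c_{\vect{r}+s\vect{u}}$ are affine in $s$ and $\phi$ is determined on $c$ by barycentric interpolation between these vertices, so $(x,s)\mapsto\phi^{\vect{r}+s\vect{u}}_{\vect{r}+\epsilon\vect{u}}(x)$ is jointly continuous there. Across cell boundaries, the cell-patching argument in the proof of Theorem \ref{Closed-Homeo} (the face-poset isomorphisms $\phi^c$ restrict compatibly to common faces) guarantees that the cell-wise maps glue to a continuous global map. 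Combined with the continuity of $s(\cdot)$ and the agreement of the two cases in $s(x)=\min(d(x),\epsilon)$ along the seam $\{d=\epsilon\}$, this yields continuity of $\Phi$ on ${}_I\Gamma^n_{\vect{r}}\times[0,1]$ and completes the argument.
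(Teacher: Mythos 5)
Your overall structure matches the paper's: identify $\Gamma^n_{\vect{r}+\epsilon\vect{u}}$ with the superlevel set $d^{-1}[\epsilon,\infty)$, choose $\epsilon$ so that the segment $\{\vect{r}+s\vect{u}: 0<s\leq\epsilon\}$ lies in one equivalence class, and define the retraction by $\Phi(x,t)=(1-t)x+t\phi^{\vect{r}+s(x)\vect{u}}_{\vect{r}+\epsilon\vect{u}}(x)$ with $s(x)=\min(d(x),\epsilon)$, which is exactly the paper's $\Psi$. The three retraction identities and the inclusion $\Phi(x,t)\in\Gamma^n_{\vect{r}+((1-t)s(x)+t\epsilon)\vect{u}}\subseteq{}_I\Gamma^n_{\vect{r}}$ are handled the same way. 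Where you genuinely diverge is the continuity proof, which is the technical heart of the paper's argument. The paper shows $f(x)=\phi^{\vect{r}+d(x)\vect{u}}_{\vect{r}+\epsilon\vect{u}}(x)$ is continuous indirectly: given $x_0\in f^{-1}(U)$ it picks $\kappa<d(x_0)$, uses the straight-line isotopy of Theorem \ref{Closed-Isotopy} to build a homeomorphism $d^{-1}(\epsilon)\times[0,1]\to d^{-1}[\kappa,\epsilon]$, and exhibits $\Phi(U\times[0,1))$ as an open neighborhood of $x_0$ inside $f^{-1}(U)$, with compactness supplying the homeomorphism. You instead argue joint continuity of $(x,s)\mapsto\phi^{\vect{r}+s\vect{u}}_{\vect{r}+\epsilon\vect{u}}(x)$ directly, by noting that the polytope vertices (hence face barycenters) are affine in $s$ while the abstract face poset is frozen on $(0,\epsilon]$, and then gluing over simplices of the barycentric subdivision, over faces, and over cells. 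This is a more computational, coordinate-level route; the paper's is a shorter soft-topology argument but relies on the auxiliary homeomorphism trick. Both work.

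Two small points worth tightening in your write-up. First, the phrase ``$\phi$ is determined on $c$ by barycentric interpolation between these vertices'' is misleading as stated: for a non-simplicial polytope $c_{\vect{r}+s\vect{u}}$, the face-poset homeomorphism $\phi^c$ is not affine on $c_{\vect{r}+s\vect{u}}$ and is not determined by its values on vertices alone; it is affine on each simplex of the barycentric subdivision, whose simplex-vertices are the barycenters of faces (these, in turn, are averages of the polytope vertices and hence affine in $s$). That is what makes the joint continuity work, so say it explicitly. Second, when you glue, make clear that the pasting lemma applies because you are covering the (restricted) domain by finitely many closed pieces (finitely many maximal cells of $\Gamma^n$, and within each, finitely many closed chains/simplices of the barycentric subdivision) on which the maps agree along overlaps by the compatibility of the $\phi^c$ with faces. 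With those clarifications the argument is complete.
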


\begin{proof}
Consider the ray $\vect{r} + \kappa \vect{u}$ in $\Omega$.  If ${}_I\Gamma^n_{\vect{r}} \neq \emptyset$, then there exists $\kappa^{\prime} > 0$ such that $\Gamma^n_{\vect{r} + \kappa \vect{u}} \neq \emptyset$ for $0 < \kappa \leq \kappa^{\prime}$.  Furthermore, since there are only finitely many equivalence classes on $\Omega$ and the intersection of the ray with any equivalence class is convex (since the components of $\vect{u}$ are nonnegative), we may also find $\epsilon < \kappa^{\prime}$ such that for all $0 < \kappa \leq \epsilon$, $\vect{r} + \kappa \vect{u} \sim \vect{r} + \epsilon \vect{u}$.  Fix this $\epsilon$.

We now define the deformation retraction $\Psi \colon {}_I\Gamma^n_{\vect{r}} \times [0,1] \rightarrow {}_I\Gamma^n_{\vect{r}}$ by:

\begin{equation*}
\Psi(x,t) := \left\{\begin{array}{lr} (1-t)x + t\phi^{\vect{r} + d(x) \vect{u}}_{\vect{r}+ \epsilon \vect{u}}(x), & d(x) \leq \epsilon \\
x, & d(x) \geq \epsilon 
\end{array}\right.
\end{equation*}

Note that $d^{-1}[\epsilon,\infty) = \Gamma^n_{\vect{r} + \epsilon \vect{u}}$ is fixed for all $t$.  We will now prove that $x \mapsto \phi^{\vect{r} + d(x) \vect{u}}_{\vect{r}+ \epsilon \vect{u}}(x)$ is continuous, and the continuity of $\Psi$ will be clear.

Let $f \colon d^{-1}(0,\epsilon] \rightarrow d^{-1}(\epsilon)$ be given by $f(x) = \phi^{\vect{r} + d(x) \vect{u}}_{\vect{r}+ \epsilon \vect{u}}(x)$.  This is well defined:  By the definition of $d(x)$, there exists some $\{i, j\} \in I$ such that $\delta(x_i,x_j) = r_{ij} + d(x)u_{ij}$ and for any $\{k,l\} \in I$, $\delta(x_k,x_l) \geq r_{kl} + d(x) u_{kl}$.  Applying $\phi^{\vect{r} + d(x) \vect{u}}_{\vect{r}+ \epsilon \vect{u}}$ to $x$, we have the same equalities/inequalities but with $\epsilon$ taking the place of $d(x)$.  Hence, $d(f(x)) = \epsilon$.

Now let $U$ be an open set in $d^{-1}(\epsilon)$.  Let $x_0 \in f^{-1}(U)$.  Then $f(x_0) = \phi^{\vect{r} + d(x_0) \vect{u}}_{\vect{r}+ \epsilon \vect{u}}(x_0) = y_0 \in U$ and $d(x_0) \leq \epsilon$.  Choose $\kappa \in (0,d(x_0))$.

Define $\Phi \colon d^{-1}(\epsilon) \times [0, 1] \rightarrow d^{-1}[\kappa,\epsilon]$ by $\Phi(y,t) = (1-t)y + t \phi^{\vect{r} + \epsilon \vect{u}}_{\vect{r} + \kappa \vect{u}}(y)$.  Then, as in an earlier theorem, $\Phi(-,t)$ is the homeomorphism $\phi^{\vect{r} + \epsilon \vect{u}}_{\vect{t}}$ where $\vect{t} = \vect{r} + ((1-t)\epsilon + t\kappa)\vect{u}$.  Also, $d(\Phi(y,t)) = (1-t)\epsilon + t\kappa$.  It easily follows that $\Phi$ is a homeomorphism.

We now claim that $x_0 \in \Phi(U \times [0,1)) \subseteq f^{-1}(U)$ which will show that $f^{-1}(U)$ is open and $f$ is continuous.
We have that $d(x_0) = (1-t_0)\epsilon + t_0 \kappa$ for $t_0 = \frac{\epsilon - d(x_0)}{\epsilon - \kappa}$.  Then $\Phi(y_0,t_0) = \phi^{\vect{r} + \epsilon \vect{u}}_{\vect{r} + d(x_0) \vect{u}}(y_0) =  (\phi^{\vect{r} + d(x_0) \vect{u}}_{\vect{r}+ \epsilon \vect{u}})^{-1}(y_0) = x_0$.  Thus, $x_0 \in \Phi(U \times [0,1))$.

Now let $(y,t) \in U \times [0,1)$.  Then $\Phi(y,t) = \phi^{\vect{r} + \epsilon \vect{u}}_{\vect{t}}(y)$ so that $f(\Phi(y,t)) = \phi^{\vect{t}}_{\vect{r} + \epsilon \vect{u}}(\phi^{\vect{r} + \epsilon \vect{u}}_{\vect{t}}(y)) = y \in U$.  Therefore, $\Phi(U \times [0,1) \subseteq f^{-1}(U)$.
\end{proof}

Since the configuration space $\Gamma^{\underline{n}}$ is the same as the space ${}_I\Gamma^n_0$, where the set $I$ consists of all pairs $\{i,j\}$, we have the following immediate corollary:

\begin{cor}\label{ConfigSpaceDefRetracts}
There exists $s > 0$ such that, for all $0 < r < s$, $\Gamma^n_r$ is a strong deformation retract of $\Gamma^{\underline{n}}$.
\end{cor}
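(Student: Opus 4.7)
My plan is to deduce the corollary as an immediate specialization of Theorem \ref{DefRetract}. First, I would take $I := \{\{i,j\} : 1 \leq i < j \leq n\}$, so that the definition of ${}_I\Gamma^n_{\vect{r}}$ with $\vect{r} = 0$ reduces to the condition $\delta(x_i, x_j) > 0$ for every pair $\{i,j\}$ --- that is, the coordinates are pairwise distinct --- which is exactly the defining condition of the configuration space $\Gamma^{\underline{n}}$. Hence ${}_I\Gamma^n_0 = \Gamma^{\underline{n}}$, as already observed in the text preceding the corollary.

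Next, I would choose $\vect{u} := (1, 1, \ldots, 1) \in \R^{\binom{n}{2}}$; since every coordinate is strictly positive, $\vect{u}$ lies in $\Omega_I$. Under this choice, for any scalar $\epsilon \geq 0$ the vector $0 + \epsilon \vect{u}$ equals $(\epsilon, \ldots, \epsilon)$, so $\Gamma^n_{0 + \epsilon \vect{u}}$ is precisely the single-parameter space $\Gamma^n_\epsilon$. Applying Theorem \ref{DefRetract} with $\vect{r} = 0$ and this $\vect{u}$ then yields a threshold $s > 0$ such that $\Gamma^n_\epsilon$ is a strong deformation retract of $\Gamma^{\underline{n}}$ for every sufficiently small $\epsilon > 0$; renaming $\epsilon$ as $r$ gives the stated claim.

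There is essentially no obstacle here, since the entire substance lies in Theorem \ref{DefRetract}. The only minor point that needs verification is that the phrase ``for sufficiently small $\epsilon > 0$'' in the conclusion of Theorem \ref{DefRetract} genuinely supplies a whole interval $(0, s)$ of valid parameters, not just a single value. I would settle this by inspecting the proof of Theorem \ref{DefRetract}: the $\epsilon$ produced there is chosen so that the entire segment $\{\kappa \vect{u} : 0 < \kappa \leq \epsilon\}$ lies in one $\sim$-equivalence class on $\Omega$, and this property is inherited by any shorter initial segment. Consequently the same construction of the deformation retraction $\Psi$ runs verbatim for each $r \in (0, \epsilon)$ in place of $\epsilon$, giving the uniform threshold $s$ required by the corollary.
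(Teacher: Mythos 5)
Your argument is correct and matches the paper's intent exactly: the paper presents this corollary as an immediate consequence of Theorem~\ref{DefRetract} once one observes that ${}_I\Gamma^n_0 = \Gamma^{\underline{n}}$ for $I$ the set of all pairs, and your choices $\vect{r}=0$, $\vect{u}=(1,\ldots,1)$ are the natural specialization. Your closing remark about the interval $(0,s)$ of valid parameters is the right point to check, and your justification---that the convexity of the equivalence class on the ray lets the construction of $\Psi$ run for every smaller $\kappa$---is exactly what makes the corollary hold uniformly below a single threshold.
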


Finally, we give the following result regarding the homeomorphism types of the restricted configuration spaces with strict proximity restraints:

\begin{theorem}\label{Open-Homeo}
Let $\vect{r}, \vect{s} \in \Omega$.  Suppose there exist $\vect{u}$ and $\vect{v}$ in $\Omega_I$ and $\epsilon > 0$ such that, for $0 < \kappa < \epsilon$,  $\vect{r} + \kappa \vect{u} \sim \vect{r} + \epsilon \vect{u} \sim \vect{s} + \epsilon \vect{v} \sim \vect{s} + \kappa \vect{v}$.  Then ${}_I \Gamma^n_{\vect{r}}$ is homeomorphic to ${}_I \Gamma^n_{\vect{s}}$.  In fact, they are isotopic in $\Gamma^n$.
\end{theorem}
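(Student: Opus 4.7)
The plan is to build a continuous family $\Phi_t \colon {}_I \Gamma^n_{\vect{r}} \to {}_I \Gamma^n_{(1-t)\vect{r}+t\vect{s}}$ of homeomorphisms for $t \in [0,1]$, with $\Phi_0$ the inclusion into $\Gamma^n$ and $\Phi_1$ the desired homeomorphism onto ${}_I \Gamma^n_{\vect{s}}$. This yields both conclusions of the theorem simultaneously. The idea is to foliate ${}_I \Gamma^n_{\vect{r}}$ by the level sets of the function $d_{\vect{r},\vect{u}}$ from Theorem~\ref{DefRetract} and apply the face-preserving homeomorphisms of Theorem~\ref{Closed-Homeo} level by level.

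Concretely, for $x \in {}_I \Gamma^n_{\vect{r}}$, set $\kappa(x) := \min(d_{\vect{r},\vect{u}}(x), \epsilon) \in (0, \epsilon]$, and put $\vect{w}_t := (1-t)\vect{u}+t\vect{v} \in \Omega_I$. Then $x \in \Gamma^n_{\vect{r}+\kappa(x)\vect{u}}$, and I define
\begin{equation*}
\Phi_t(x) := \phi^{\vect{r}+\kappa(x)\vect{u}}_{(1-t)\vect{r}+t\vect{s}+\kappa(x)\vect{w}_t}(x).
\end{equation*}
Since the target parameter is a convex combination of the equivalent parameters $\vect{r}+\kappa(x)\vect{u}$ and $\vect{s}+\kappa(x)\vect{v}$, the observation at the end of Section~\ref{cells} ensures every cell nonempty for $\vect{r}+\kappa(x)\vect{u}$ retains its combinatorial type under this intermediate parameter, so the homeomorphism is well-defined on the cell containing $x$.

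The verifications I would carry out are: (i) $\Phi_0$ is the inclusion and $\Phi_1$ lands in ${}_I \Gamma^n_{\vect{s}}$, both immediate from the construction; (ii) joint continuity of $\Phi$, which on each maximal cell reduces to continuity of $\phi^{\vect{p}}_{\vect{q}}$ in $(\vect{p},\vect{q})$ within the equivalence class, a consequence of the linear dependence of vertices of $c_{\vect{b}}$ on $\vect{b}$ noted in the last paragraph of Section~\ref{polytopes}; and (iii) bijectivity of each $\Phi_t$ onto ${}_I \Gamma^n_{(1-t)\vect{r}+t\vect{s}}$, which in particular provides the injectivity required for an isotopy.

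The crux is (iii), which hinges on showing that $\Phi_t$ preserves the level function, that is, $\min(d_{(1-t)\vect{r}+t\vect{s},\vect{w}_t}(\Phi_t(x)), \epsilon) = \kappa(x)$. When $\kappa(x) = d_{\vect{r},\vect{u}}(x) \leq \epsilon$, the minimum defining $d_{\vect{r},\vect{u}}(x)$ is attained at some $\{i,j\} \in I$, meaning $x$ lies on a face of its cell indexed by a $D^{ij}$ or $E^{ij}$ inequality with $\{i,j\} \in I$. By the face-preserving property of $\phi$, the image $y := \Phi_t(x)$ lies on the face carrying the same label, yielding $\delta(y_i,y_j) = (1-t)r_{ij}+ts_{ij}+\kappa(x)(\vect{w}_t)_{ij}$ and hence the level preservation; when $\kappa(x) = \epsilon$, one simply has $y \in \Gamma^n_{(1-t)\vect{r}+t\vect{s}+\epsilon\vect{w}_t}$, so the minimum is at least $\epsilon$. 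This level preservation shows that $y \mapsto \phi^{(1-t)\vect{r}+t\vect{s}+\kappa'(y)\vect{w}_t}_{\vect{r}+\kappa'(y)\vect{u}}(y)$, with $\kappa'(y) := \min(d_{(1-t)\vect{r}+t\vect{s},\vect{w}_t}(y), \epsilon)$, is a continuous two-sided inverse to $\Phi_t$, completing the proof.
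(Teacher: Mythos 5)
Your proof is the paper's, just organized in the opposite order. Your formula $\Phi_t(x) = \phi^{\vect{r}+\kappa(x)\vect{u}}_{(1-t)\vect{r}+t\vect{s}+\kappa(x)\vect{w}_t}(x)$ is precisely the paper's $\Theta(-,t) = \theta^{\vect{r},\vect{u}}_{(1-t)\vect{r}+t\vect{s},(1-t)\vect{u}+t\vect{v}}$: the paper first constructs the endpoint homeomorphism $\theta = \Phi_1$ and then recovers this formula from the straight-line map $\Theta(x,t) = (1-t)x + t\theta(x)$, whereas you write the family directly. Your level-preservation argument for (iii) is likewise the content of the paper's verification that $d'(\theta(x))=d(x)$, which justifies the explicit inverse $\theta^{-1}=\theta^{\vect{s},\vect{v}}_{\vect{r},\vect{u}}$.

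The one step you underweight is (ii), and it is where nearly all of the paper's work in this proof actually lies. Asserting joint continuity of $(\vect{p},\vect{q},x)\mapsto\phi^{\vect{p}}_{\vect{q}}(x)$ ``from the linear dependence of vertices'' elides a genuine issue: the vertices of $c_{\vect{b}}$ do depend affinely on $\vect{b}$ within an equivalence class, but $\phi^{\vect{p}}_{\vect{q}}$ is the homeomorphism induced by the face-poset isomorphism via barycentric subdivision, and one must verify that this PL map varies continuously as the polytope deforms; moreover the domain $d^{-1}(0,\epsilon]$ is non-compact and the levels shrink to nothing as $d\to 0$, so a cell-by-cell pasting argument needs care there. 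The paper handles exactly this by introducing $g(x,t)=\phi^{\vect{r}+\epsilon\vect{u}}_{\vect{r}+t\vect{u}}(x)$, proving $g$ is a homeomorphism $d^{-1}(\epsilon)\times(0,\epsilon]\to d^{-1}(0,\epsilon]$ (first on the compact slices $d^{-1}[\kappa,\epsilon]$ via continuous-bijection-from-compact-to-Hausdorff, then on the open interval by a local openness argument), and then writing $\theta$ as $g_{\vect{s},\vect{v}}\circ(\phi^{\vect{r}+\epsilon\vect{u}}_{\vect{s}+\epsilon\vect{v}}\times\mathrm{id})\circ g_{\vect{r},\vect{u}}^{-1}$, a composition of maps already known to be continuous. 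Your sketch is aimed in the right direction, but a complete write-up would need something like this factorization to make the continuity claim rigorous.
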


We remark that the hypothesis regarding $\epsilon$ simply means that, using the construction in the proof of \ref{DefRetract}, ${}_I \Gamma^n_{\vect{r}}$ and ${}_I \Gamma^n_{\vect{s}}$ deformation retract to closed spaces of the same isotopy type.

\begin{proof}
Pick $\vect{u}, \vect{v}$, and $\epsilon$ as in the statement.  Let $d = d_{\vect{r},\vect{u}}$ and $d^{\prime} = d_{\vect{s},\vect{v}}$.  
We define the homeomorphism $\theta = \theta^{\vect{r},\vect{u}}_{\vect{s},\vect{v}} = \theta \colon {}_I\Gamma^n_{\vect{r}} \rightarrow {}_I\Gamma^n_{\vect{s}}$ by:

\begin{equation*}
\theta(x) := \left\{\begin{array}{lr} \phi^{\vect{r} + d(x) \vect{u}}_{\vect{s}+ d(x) \vect{v}}(x), & d(x) \leq \epsilon \\
\phi^{\vect{r} + \epsilon \vect{u}}_{\vect{s} + \epsilon \vect{v}}(x), & d(x) \geq \epsilon 
\end{array}\right.
\end{equation*}

We remark that, if $d(x) \leq \epsilon$, we have that $d^{\prime}(\phi^{\vect{r}+d(x)\vect{u}}_{\vect{s} + d(x)\vect{v}}(x)) = d(x)$.  Therefore, the inverse map is given by:

\begin{equation*}
\theta^{-1}(x) := \left\{\begin{array}{lr} \phi^{\vect{s} + d^{\prime}(x) \vect{v}}_{\vect{r}+ d^{\prime}(x) \vect{u}}(x), & d^{\prime}(x) \leq \epsilon \\
\phi^{\vect{s} + \epsilon \vect{v}}_{\vect{r} + \epsilon \vect{u}}(x), & d^{\prime}(x) \geq \epsilon 
\end{array}\right.
\end{equation*}

Therefore, we have that $\theta$ is one-to-one and onto.  All that remains is the continuity of these maps which will be proven if we show that the map $x \mapsto \phi^{\vect{r} + d(x) \vect{u}}_{\vect{s}+ d(x) \vect{v}}(x)$ is continuous.

First, consider the map $g = g_{\vect{r},\vect{u}} \colon d^{-1}(\epsilon) \times (0,\epsilon] \rightarrow d^{-1}(0,\epsilon]$ given by $g(x,t) = \phi^{\vect{r} + \epsilon \vect{u}}_{\vect{r} + t \vect{u}}(x)$.  This is one-to-one:  $d(g(x,t)) = t$ and $g(-,t) =\phi^{\vect{r} + \epsilon \vect{u}}_{\vect{r} + t \vect{u}}$ is a homeomorphism.  Also, $g$ is onto:  For $y \in d^{-1}(0,\epsilon]$, $g^{-1}(y) = (\phi^{\vect{r} + d(y)\vect{u}}_{\vect{r}+\epsilon \vect{u}}(y), d(y)) = (f(y),d(y))$.  Notice here that $g^{-1}$ is continuous because both component functions are continuous.

For $\kappa \in (0,\epsilon]$, let $g_{\kappa}$ be the restriction of $g$ to $d^{-1}(\epsilon) \times [\kappa,\epsilon]$.  Then $g_{\kappa}^{-1}$ is a continuous bijection from the compact $d^{-1}[\kappa,\epsilon]$ to the Hausdorff $d^{-1}(\epsilon) \times [\kappa,\epsilon]$, so $g_{\kappa}$ is a homeomorphism.  

Now, let $U \subseteq d^{-1}(0,\epsilon]$ be open and $(x,t) \in g^{-1}(U)$.  Choose $\kappa < t$.  Then $V = U \cap d^{-1}(\kappa,\epsilon]$ is open in $d^{-1}(0,\epsilon]$, and $g^{-1}(V) = g^{-1}_{\kappa}(V)$ is open in $d^{-1}(\epsilon) \times (\kappa, \epsilon]$, and so too in  $d^{-1}(\epsilon) \times (0, \epsilon]$.  Then $(x,t) \in g^{-1}(V) \subset g^{-1}(U)$, and we have that $g$ is continuous.  

We now consider the map $x \mapsto \phi^{\vect{r} + d(x) \vect{u}}_{\vect{s}+ d(x) \vect{v}}(x)$.  This map can be written as a composition of continuous functions: $x \mapsto g_{\vect{s},\vect{v}}(\phi^{\vect{r} + \epsilon \vect{u}}_{\vect{s} + \epsilon \vect{v}}(\phi^{\vect{r} + d(x) \vect{u}}_{\vect{r} + \epsilon \vect{u}}(x)), d(x))$.  Hence, $\theta$ and $\theta^{-1}$ are continuous.

Next, we define the isotopy.  
Define $\Theta \colon {}_I\Gamma^n_{\vect{r}} \times [0,1] \rightarrow \Gamma^n$ by $\Theta(x,t) = (1-t)x + t \theta^{\vect{r},\vect{u}}_{\vect{s},\vect{v}} (x)$.  It can be shown that, for $d(x) \leq \epsilon$, $\Theta(x,t) = \phi^{\vect{r} + d(x)\vect{u}}_{((1-t)\vect{r} + t\vect{s}) + d(x)((1-t)\vect{u} + t\vect{v})}(x)$.  For $d(x) \geq \epsilon$, $\Theta(x,t) = \phi^{\vect{r} + \epsilon \vect{u}}_{((1-t)\vect{r} + t\vect{s}) + \epsilon((1-t)\vect{u} + t \vect{v})}$.  Together, that means that $\Theta(-,t) = \theta^{\vect{r},\vect{u}}_{(1-t)\vect{r} + t\vect{s},(1-t)\vect{u} + t\vect{v}}$.  

As before, we note that, for given $t \in (0,1)$ and $\kappa \in (0,\epsilon]$, $(1-t)\vect{r} + t\vect{s} + \kappa((1-t)\vect{u} + t\vect{v})$ may not be in the equivalence class of $\vect{r} + \kappa \vect{u}$.  However, we can define the homeomorphisms to construct $\theta^{\vect{r},\vect{u}}_{(1-t)\vect{r} + t\vect{s},(1-t)\vect{u} + t\vect{v}}$ on just the cells $c$ of $\Gamma^n$ for which $c_{\vect{r}}$ is nonempty.

Thus, we see that $\Theta(-,t)$ is a homeomorphism, so $\Theta$ is an isotopy.
\end{proof}





\bibliographystyle{model1b-num-names}

\begin{thebibliography}{99}

\bibitem{Abrams00}\textbf{A Abrams}, \emph{Configuration spaces and braid groups of graphs}, PhD thesis, University of California, Berkeley (2000)

\bibitem{Abrams02}\textbf{A Abrams}, \emph{Configuration spaces of colored graphs}, Geometriae Dedicata, 92(1) (2002), 185-194

\bibitem{AG02}\textbf{A Abrams and R Ghrist}, \emph{Finding topology in a factory: configuration space}, Amer. Math. Monthly, 109 (2002), 140-150

\bibitem{Barnett10}\textbf{K Barnett}, \emph{The configuration space of two particles moving on a graph}, PhD thesis, Durham University (2010)

\bibitem{BF09}\textbf{K Barnett and M Farber}, \emph{Topology of configuration space of two particles on a graph. I}, Algebra. Geom. Topol., 9 (2009), 593-624

\bibitem{BBK11}\textbf{Y Baryshnikov, P Bubenik, and M Kahle}, \emph{Min-type Morse theory for configuration spaces of hard spheres}, preprint (2011)


\bibitem{BLSWZ99}\textbf{A Bj\"{o}rner, M Las Vergnas, B Sturmfels, N
White, and G M Ziegler}, \emph{Oriented matroids,} 2nd ed., Encyc.
of Math. and its Appl. 46, Cambridge Univ. Press (1999)

\bibitem{CGKM12}\textbf{G Carlsson, J Gorham, M Kahle, and J Mason}, \emph{Computational topology for configuration spaces of hard disks}, Phs. Rev. E, 85:011303 (2012)

\bibitem{Deeley11a}\textbf{K Deeley}, \emph{Configuration spaces of thick particles on graphs}, PhD thesis, Durham University (2011)

\bibitem{Deeley11b}\textbf{K Deeley}, \emph{Configuration spaces of thick particles on a metric graph,} Algebr. Geom. Topol., 11 (2011), 1861-1892 

\bibitem{Farber08}\textbf{M Farber}, \emph{Invitation to topological robotics}, Z\"{u}rich Lectures in Advanced Math., Eur. Math. Soc., Z\"{u}rich (2008)

\bibitem{FH10}\textbf{M Farber and E Hanbury}, \emph{Topology of configuration space of two particles on a graph, II}, Algebr. Geom. Topol., 10 (2010), 2203-2227

\bibitem{Ghrist01}\textbf{R Ghrist}, \emph{Configuration spaces and braid groups on graphs in robotics}, Braids, Links, and Mapping Class Groups: the Proceedings of Joan Birman's 70th Birthday, AMS/IP Studies in Mathematics, 20 (2001), 29-40

\bibitem{Ghrist10}\textbf{R Ghrist}, \emph{Configuration spaces, braids, and robotics}, Lecture Note Series, Inst. Math. Sci., NUS, 19 (2010), World Scientific, 263-304


\bibitem{GK02}\textbf{R Ghrist and D Koditschek}, \emph{Safe cooperative robot dynamics on graphs}, SIAM J. Cont. \& Opt., 40(5) (2002), 1556-1575


\bibitem{KP11}\textbf{K H Ko and H W Park}, \emph{Characteristics of graph braid groups}, arXiv:1101.2648v1

\bibitem{LW69}\textbf{A Lundell and S Weingram}, \emph{The topology of CW complexes}, Van Nostrand Reinhold Company (1969)

\bibitem{Mac Lane98}\textbf{S Mac Lane}, \emph{Categories for the Working Mathematician}, 2nd edition, Springer (1998)

\bibitem{Swiatkowski01}\textbf{J \'{S}wi\c{a}tkowski}, \emph{Estimates for homological dimension of configuration spaces of graphs}, Colloq. Math., 89 (2001), 69-79


\end{thebibliography}







\end{document}